\documentclass{amsart}
\usepackage{amscd,amsmath,amssymb}
\usepackage{pstricks}
\usepackage{latexsym,amsbsy,mathrsfs}
\usepackage{xy}
\usepackage{xypic}
\usepackage{pgf,tikz}

\newtheorem{thm}{Theorem}[section]
\newtheorem{lem}[thm]{Lemma}
\newtheorem{cor}[thm]{Corollary}
\newtheorem{prop}[thm]{Proposition}

\setcounter{section}{0}
\theoremstyle{definition}
\newtheorem{example}[thm]{Example}

\newtheorem{defn}[thm]{Definition}

\newtheorem{rem}[thm]{Remark}

\numberwithin{equation}{thm}

\begin{document}
\title[N-ANGULATED QUOTIENT CATEGORIES INDUCED BY MUTATION PAIRS]
{ N-ANGULATED QUOTIENT CATEGORIES INDUCED BY MUTATION PAIRS}

\author{Zengqiang Lin}
\address{ School of Mathematical sciences, Huaqiao University,
Quanzhou\quad 362021,  China.} \email{lzq134@163.com}

\thanks{This work was supported in part by the National Natural Science Foundation of China (Grants No. 11101084)
and the Natural Science Foundation of Fujian Province (Grants No. 2013J05009)}

\subjclass[2010]{18E30}

\keywords{$n$-angulated category; quotient category; mutation pair.}

\begin{abstract} We define mutation pair in an $n$-angulated category and prove that given such a mutation pair, the corresponding quotient category carries a natural $n$-angulated structure. This result generalizes a theorem of Iyama-Yoshino in classical triangulated category. As an application, we obtain that the quotient category of a Frobenius $n$-angulated category is also an $n$-angulated category.
\end{abstract}

\maketitle

\section{Introduction}
Triangulated categories were introduced by Grothendieck-Verdier \cite{[GV]} and Puppe \cite{[P]} independently to axiomatize the properties of derived categories and stable homotopy categories respectively. Triangulated category is a very important structure in both geometry and algebra.

Geiss, Keller and Oppermann introduced the notion of $n$-angulated categories, which are $``$higher dimensional" analogues of triangulated categories, and showed that certain $(n-2)$-cluster tilting subcategories of a triangulated category give rise to $n$-angulated categories. For $n=3$, an $n$-angulated category is nothing but a  classical triangulated category. Nowadays the theory of $n$-angulated categories has been developed further.
Bergh and Thaule discussed the axioms for an $n$-angulated category \cite{[BT1]}. They  introduced a higher $``$octahedral axiom" and showed that it is equivalent to the mapping cone axiom. Another examples of $n$-angulated categories from local algebras were given in \cite{[BT2]}.  The notion of Grothendieck group of an $n$-angulated category was introduced to give a  generalization of Thomason's classification theorem for triangulated subcategories \cite{[BT3]}. Recently, Jasso introduced $n$-abelian and $n$-exact categories, and showed that the quotient category of a Frobenius $n$-exact category has a natural structure of $(n+2)$-angulated category \cite[Theorem 5.11]{[J]}, which is a higher analogue of  Happel's Theorem \cite[Theorem 2.6]{[Ha]}.

The aim of this paper is to discuss the construction of $n$-angulated categories. In this paper, we define mutation pair in an $n$-angulated category and prove that given such a mutation pair, the corresponding quotient category carries a natural $n$-angulated structure.  For $n=3$, our main result recover Iyama-Yoshino's theorem \cite[Theorem 4.2]{[IY]}. As an application, we show that the quotient category of a Frobenius $n$-angulated category has a natural structure of $n$-angulated category, which is a higher version of \cite[Theorem 7.2]{[B]}. We can compare it with \cite[Theorem 5.11]{[J]}.

This paper is organized as follows. In Section 2, we recall the definition of an $n$-angulated category and give some useful facts. In Section 3, we define mutation pair in an $n$-angulated category, then state and prove our main results.

\section{N-angulated categories}

In this section we recall some basics on $n$-angulated categories from \cite{[GKO]} and \cite{[BT1]}.

Let $\mathcal{C}$ be an additive category equipped with an automorphism $\Sigma:\mathcal{C}\rightarrow\mathcal{C}$, and $n$  an integer greater than or equal to three. An $n$-$\Sigma$-$sequence$ in $\mathcal{C}$ is a sequence of morphisms
$$X_1\xrightarrow{f_1}X_2\xrightarrow{f_2}X_3\xrightarrow{f_3}\cdots\xrightarrow{f_{n-1}}X_n\xrightarrow{f_n}\Sigma X_1.$$
Its $left\ rotation$ is the $n$-$\Sigma$-sequence
$$X_2\xrightarrow{f_2}X_3\xrightarrow{f_3}X_4\xrightarrow{f_4}\cdots\xrightarrow{f_{n-1}}X_n\xrightarrow{f_n}\Sigma X_1\xrightarrow{(-1)^n\Sigma f_1}\Sigma X_2.$$  We can define $right\ rotation\ of\ an$ $n$-$\Sigma$-$sequence$ similarly.
A $morphism\ of$ $n$-$\Sigma$-$sequences$ is  a sequence of morphisms $\varphi=(\varphi_1,\varphi_2,\cdots,\varphi_n)$ such that the following diagram commutes
$$\xymatrix{
X_1 \ar[r]^{f_1}\ar[d]^{\varphi_1} & X_2 \ar[r]^{f_2}\ar[d]^{\varphi_2} & X_3 \ar[r]^{f_3}\ar[d]^{\varphi_3} & \cdots \ar[r]^{f_{n-1}}& X_n \ar[r]^{f_n}\ar[d]^{\varphi_n} & \Sigma X_1 \ar[d]^{\Sigma \varphi_1}\\
Y_1 \ar[r]^{g_1} & Y_2 \ar[r]^{g_2} & Y_3 \ar[r]^{g_3} & \cdots \ar[r]^{g_{n-1}} & Y_n \ar[r]^{g_n}& \Sigma Y_1\\
}$$
where each row is an $n$-$\Sigma$-sequence. It is an {\em isomorphism} if $\varphi_1, \varphi_2, \cdots, \varphi_n$ are all isomorphisms in $\mathcal{C}$.

\begin{defn} (\cite{[GKO]})
An $n$-$angulated\ category$ is a triple $(\mathcal{C}, \Sigma, \Theta)$, where $\mathcal{C}$ is an additive category, $\Sigma$ is an automorphism of $\mathcal{C}$, and $\Theta$ is a class of $n$-$\Sigma$-sequences (whose elements are called $n$-angles), which satisfies the following axioms:

(N1) (a) The class $\Theta$ is closed under direct sums and direct summands.

(b) For each object $X\in\mathcal{C}$ the trivial sequence
$$X\xrightarrow{1_X}X\rightarrow 0\rightarrow\cdots\rightarrow 0\rightarrow \Sigma X$$
belongs to $\Theta$.

(c) For each morphism $f_1:X_1\rightarrow X_2$ in $\mathcal{C}$, there exists an $n$-angle whose first morphism is $f_1$.
\end{defn}

(N2) An $n$-$\Sigma$-sequence belongs to $\Theta$ if and only if its left rotation belongs to $\Theta$.

(N3) Each commutative diagram
$$\xymatrix{
X_1 \ar[r]^{f_1}\ar[d]^{\varphi_1} & X_2 \ar[r]^{f_2}\ar[d]^{\varphi_2} & X_3 \ar[r]^{f_3}\ar@{-->}[d]^{\varphi_3} & \cdots \ar[r]^{f_{n-1}}& X_n \ar[r]^{f_n}\ar@{-->}[d]^{\varphi_n} & \Sigma X_1 \ar[d]^{\Sigma \varphi_1}\\
Y_1 \ar[r]^{g_1} & Y_2 \ar[r]^{g_2} & Y_3 \ar[r]^{g_3} & \cdots \ar[r]^{g_{n-1}} & Y_n \ar[r]^{g_n}& \Sigma Y_1\\
}$$ with rows in $\Theta$ can be completed to a morphism of  $n$-$\Sigma$-sequences.

(N4) In the situation of (N3), the morphisms $\varphi_3, \varphi_4, \cdots,\varphi_n$ can be chosen such that the mapping cone
$$X_2\oplus Y_1\xrightarrow{\left(
                              \begin{smallmatrix}
                                -f_2 & 0 \\
                                \varphi_2 & g_1 \\
                              \end{smallmatrix}
                            \right)}
 X_3\oplus Y_2 \xrightarrow{\left(
                              \begin{smallmatrix}
                                -f_3 & 0 \\
                                \varphi_3 & g_2 \\
                              \end{smallmatrix}
                            \right)}
 \cdots \xrightarrow{\left(
                            \begin{smallmatrix}
                               -f_n & 0 \\
                                \varphi_n & g_{n-1} \\
                             \end{smallmatrix}
                           \right)}
 \Sigma X_1\oplus Y_n \xrightarrow{\left(
                              \begin{smallmatrix}
                                -\Sigma f_1 & 0 \\
                                \Sigma\varphi_1 & g_n \\
                              \end{smallmatrix}
                            \right)}
 \Sigma X_2\oplus \Sigma Y_1 \\
$$
belongs to $\Theta$.

\vspace{2mm}
 We recall a higher $~``$octahedral axiom" (N4$'$) for an $n$-angulated category as follows, which was introduced by Bergh and Thaule \cite{[BT1]}.

(N4$'$) Given a commutative diagram
$$\xymatrix{
X_1\ar[r]^{f_1}\ar@{=}[d] & X_2 \ar[r]^{f_2}\ar[d]^{\varphi_2} & X_3 \ar[r]^{f_3} & \cdots\ar[r]^{f_{n-2}} & X_{n-1}\ar[r]^{f_{n-1}} & X_n \ar[r]^{f_n} & \Sigma X_1\ar@{=}[d] \\
X_1\ar[r]^{g_1} & Y_2 \ar[r]^{g_2}\ar[d]^{h_2} & Y_3\ar[r]^{g_3} & \cdots\ar[r]^{g_{n-2}} & Y_{n-1}\ar[r]^{g_{n-1}} & Y_n \ar[r]^{g_n} & \Sigma X_1 \\
& Z_3\ar[d]^{h_3} & & & & & \\
& \vdots\ar[d]^{h_{n-2}} & & & & & \\
& Z_{n-1}\ar[d]^{h_{n-1}} & & & & & \\
& Z_n\ar[d]^{h_n} & & & & & \\
& \Sigma X_2 & & & & & \\
}$$ whose top rows and second column are $n$-angles. Then there exist morphisms $\varphi_i:X_i\rightarrow Y_i (i=3,4,\cdots,n)$, $\psi_j: Y_j\rightarrow Z_j (j=3,4,\cdots,n)$ and $\phi_k: X_k\rightarrow Z_{k-1} (k=4,5,\cdots,n)$ with the following two properties:

(a) The sequence $(1_{X_1},\varphi_2, \varphi_3,\cdots,\varphi_n)$ is a morphism of $n$-angles.

(b) The $n$-$\Sigma$-sequence
$$X_3\xrightarrow{\left(
                    \begin{smallmatrix}
                      f_3 \\
                      \varphi_3 \\
                    \end{smallmatrix}
                  \right)} X_4\oplus Y_3\xrightarrow{\left(
                             \begin{smallmatrix}
                               -f_4 & 0 \\
                               \varphi_4 & -g_3 \\
                               \phi_4 & \psi_3 \\
                             \end{smallmatrix}
                           \right)}
 X_5\oplus Y_4\oplus Z_3\xrightarrow{\left(
                                       \begin{smallmatrix}
                                         -f_5 & 0 & 0 \\
                                         -\varphi_5 & -g_4 & 0 \\
                                         \phi_5 & \psi_4 & h_3 \\
                                       \end{smallmatrix}
                                     \right)}X_6\oplus Y_5\oplus Z_4$$
$$\xrightarrow{\left(
                                       \begin{smallmatrix}
                                         -f_6 & 0 & 0 \\
                                         \varphi_6 & -g_5 & 0 \\
                                         \phi_6 & \psi_5 & h_4 \\
                                       \end{smallmatrix}
                                     \right)}\cdots\xrightarrow{\scriptsize\left(\begin{smallmatrix}
             -f_{n-1} & 0 & 0 \\
             (-1)^{n-1}\varphi_{n-1} & -g_{n-2} & 0 \\
             \phi_{n-1} & \psi_{n-2} & h_{n-3} \\
             \end{smallmatrix}
             \right)}X_n\oplus Y_{n-1}\oplus Z_{n-2}$$
$$\xrightarrow{\left(
                                                       \begin{smallmatrix}
                                                         (-1)^n\varphi_n &-g_{n-1} &0 \\
                                                          \phi_n& \psi_{n-1}& h_{n-2} \\
                                                       \end{smallmatrix}
                                                     \right)}Y_n\oplus Z_{n-1}\xrightarrow{(\psi_n\ h_{n-1})}Z_n\xrightarrow{\Sigma f_2\cdot h_n}\Sigma X_3 \hspace{10mm}(2.1)$$
is an $n$-angle, and $h_n\cdot\psi_n=\Sigma f_1\cdot g_n$.

\begin{thm} (\cite[Theorem 4.4]{[BT1]}) \label{2.3}
If $\Theta$ is a class of $n$-$\Sigma$-sequences satisfying the axioms (N1), (N2) and (N3), then $\Theta$ satisfies (N4) if and only if $\Theta$ satisfies (N4$'$).
\end{thm}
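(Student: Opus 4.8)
The plan is to establish the two implications separately, and in both cases the toolkit is the same: the mapping cone construction of (N4), the rotation equivalence (N2), the completion of commutative squares to morphisms of $n$-angles via (N3), and—crucially—the closure of $\Theta$ under direct summands in (N1)(a), together with the fact that every rotation of a trivial $n$-angle lies in $\Theta$ (by (N1)(b) and (N2)). The central technical phenomenon is that when a morphism of $n$-$\Sigma$-sequences has an identity component, its mapping cone splits off a contractible summand, so that the complementary ``reduced'' $n$-angle again lies in $\Theta$; this reduction is what lets one pass between the mapping cone of (N4) and the longer sequence (2.1).

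For (N4) $\Rightarrow$ (N4$'$), beginning from the octahedral data, I would first use (N3) to lift the commutative square $\varphi_2 f_1 = g_1$ to a morphism $\varphi = (1_{X_1},\varphi_2,\ldots,\varphi_n)$ of the $X$-angle into the $Y$-angle; this produces the maps $\varphi_i$ and establishes property (a). By (N4) these $\varphi_i$ may be chosen so that the cone $C(\varphi)$ lies in $\Theta$. Since $\varphi_1 = 1_{X_1}$, the entry $\Sigma\varphi_1 = 1_{\Sigma X_1}$ of the last cone matrix is an isomorphism, and splitting off the corresponding contractible summand leaves a reduced $n$-angle $X_2 \to X_3\oplus Y_2 \to \cdots \to Y_n \to \Sigma X_2$ in $\Theta$. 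The remaining task is to splice this reduced $n$-angle against the given column $n$-angle $X_2\xrightarrow{\varphi_2}Y_2\to Z_3\to\cdots\to Z_n\to\Sigma X_2$ so as to define the morphisms $\psi_j,\phi_k$ and to identify the resulting complex with (2.1); this is carried out by a further application of (N4) (or (N3)) and a matrix computation, after which the compatibility $h_n\cdot\psi_n = \Sigma f_1\cdot g_n$ falls out of the identification.

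For the converse (N4$'$) $\Rightarrow$ (N4), starting from the data of (N4)—two $n$-angles and a commutative square $\varphi_2 f_1 = g_1\varphi_1$ between their leading morphisms—I would assemble an octahedral diagram: invoke (N1)(c) to complete $\varphi_2$ to an $n$-angle serving as the second column, and arrange the first column to be the identity on $X_1$ after a preliminary reduction that handles the general leading map $\varphi_1$ (for instance by treating the $n$-angle of $\varphi_1$ separately and composing mapping cones). Applying (N4$'$) then produces the $n$-angle (2.1); unwinding its matrices and splitting off the contractible summand recovers precisely the mapping cone $C(\varphi)$, showing $C(\varphi)\in\Theta$ and exhibiting the completed maps $\varphi_3,\ldots,\varphi_n$ as a valid choice for (N4).

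The hard part will be the reconciliation of signs together with the multi-step assembly of (2.1): the defining matrices of the mapping cone in (N4) and of the sequence (2.1) in (N4$'$) use different but compatible sign conventions (the alternating signs $(-1)^i$ and the placement of the minus signs across the three columns $X,Y,Z$), and producing the explicit isomorphism of $n$-$\Sigma$-sequences that matches a composite of reduced mapping cones with (2.1) is where essentially all the effort concentrates. Once that isomorphism is in hand, closure of $\Theta$ under isomorphism and under direct summands from (N1)(a) finishes each direction.
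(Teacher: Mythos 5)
First, a caveat: the paper does not prove this statement at all — Theorem \ref{2.3} is imported verbatim from Bergh--Thaule \cite[Theorem 4.4]{[BT1]}, so there is no ``paper's own proof'' to compare against; your proposal has to be measured against the argument in [BT1]. For the direction (N4) $\Rightarrow$ (N4$'$), your outline does match that argument: complete $(1_{X_1},\varphi_2)$ by (N4) so that the cone lies in $\Theta$, split off the contractible summand created by $\Sigma\varphi_1=1_{\Sigma X_1}$ (using (N1)(a), (N1)(b) and (N2)) to get the reduced cone $X_2\to X_3\oplus Y_2\to\cdots\to Y_n\to\Sigma X_2$ in $\Theta$, then apply (N4) a second time to the morphism $(1_{X_2},(0\ 1),\ldots)$ from this reduced cone into the $n$-angle on $\varphi_2$, and split two further contractible summands to identify the result with (2.1); the relation $h_n\cdot\psi_n=\Sigma f_1\cdot g_n$ does come out of the commutativity of the last square of that completion. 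Note that your hedge ``(or (N3))'' must be dropped: (N3) gives no control over the cone, and the second application must be (N4).

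The genuine gap is in your converse direction. In the (N4) situation $\varphi_1:X_1\to Y_1$ is an arbitrary morphism between two angles with \emph{different} first objects, whereas (N4$'$) only applies to two angles sharing the object $X_1$ with identity first component; there is no ``preliminary reduction'' that arranges $\varphi_1=1_{X_1}$. Moreover, even in the special case $\varphi_1=1_{X_1}$, your claim that ``splitting off the contractible summand recovers precisely the mapping cone $C(\varphi)$'' from (2.1) is false on shape grounds: (2.1) has terms $X_{i+1}\oplus Y_i\oplus Z_{i-1}$ and retains the genuinely nontrivial column $Z_3,\ldots,Z_n$ of the auxiliary angle on $\varphi_2$, while $C(\varphi)$ has terms $X_{i+1}\oplus Y_i$; the $Z$-part is not a contractible summand and cannot be split off, and there is no two-out-of-three or cancellation principle for cones that would let you pass from ``(2.1) $\in\Theta$'' to ``$C(\varphi)\in\Theta$''. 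The actual argument in [BT1] (as in Neeman's ``good morphisms'' proof for $n=3$) must introduce an auxiliary $n$-angle on the common composite $g_1\varphi_1=\varphi_2 f_1$, apply (N4$'$) and its dual to the two factorizations so as to produce a completion $(\varphi_1,\ldots,\varphi_n)$ compatible with the \emph{given} second row, and then assemble $C(\varphi)$ by a delicate comparison; ``treating the $n$-angle of $\varphi_1$ separately and composing mapping cones'' does not substitute for this, because the cone of a composite of morphisms of angles is not controlled by the cones of the factors. As it stands, your plan proves only one implication.
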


At the end of this section, we give two useful facts on $n$-angulated categories.

\begin{lem}\label{2.1} (\cite[Remarks 1.2 (c)]{[GKO]})
If $(\mathcal{C},\Sigma,\Theta)$ is an $n$-angulated category, then the opposite category $(\mathcal{C}^{op},\Sigma^{-1},\Phi)$ is also an $n$-angulated category, where $$\Sigma^{-1}X_n\xleftarrow{(-1)^n\Sigma^{-1}f_n}X_{1}\xleftarrow{f_1}X_2\xleftarrow{f_2} \cdots \xleftarrow{f_{n-2}}X_{n-1}\xleftarrow{f_{n-1}}X_n$$ is an $n$-angle in $\Phi$ provided that $$X_1\xrightarrow{f_1}X_2\xrightarrow{f_2}X_3\xrightarrow{f_3}\cdots\xrightarrow{f_{n-1}}X_n\xrightarrow{f_n}\Sigma X_1$$ is an $n$-angle in $\Theta$.
\end{lem}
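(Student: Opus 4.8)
The plan is to verify directly that $(\mathcal{C}^{op},\Sigma^{-1},\Phi)$ satisfies (N1)--(N4), transporting each axiom through the evident correspondence. Write $D$ for the operation sending an $n$-$\Sigma$-sequence
$$X_1\xrightarrow{f_1}X_2\xrightarrow{f_2}\cdots\xrightarrow{f_{n-1}}X_n\xrightarrow{f_n}\Sigma X_1$$
in $\mathcal{C}$ to the $n$-$\Sigma^{-1}$-sequence
$$X_n\xrightarrow{f_{n-1}}X_{n-1}\xrightarrow{f_{n-2}}\cdots\xrightarrow{f_1}X_1\xrightarrow{(-1)^n\Sigma^{-1}f_n}\Sigma^{-1}X_n$$
read in $\mathcal{C}^{op}$. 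Then $D$ is a bijection from $n$-$\Sigma$-sequences in $\mathcal{C}$ onto $n$-$\Sigma^{-1}$-sequences in $\mathcal{C}^{op}$, it is compatible with finite biproducts, and by definition $\Phi=D(\Theta)$. The whole proof reduces to matching each axiom for $\Phi$ with the corresponding axiom for $\Theta$ under $D$.

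Axioms (N1) and (N2) are the warm-up. Since finite biproducts and retracts are self-dual and preserved by $D$, (N1)(a) for $\Phi$ is just (N1)(a) for $\Theta$. For (N1)(b) one checks that the trivial $n$-$\Sigma^{-1}$-sequence $X\xrightarrow{1_X}X\to 0\to\cdots\to 0\to\Sigma^{-1}X$ equals $D$ applied to $0\to\cdots\to 0\to X\xrightarrow{1_X}X\to 0$, which is an $(n-2)$-fold right rotation of the trivial angle on $X$ and hence lies in $\Theta$ by closure under rotation. For (N1)(c), a morphism of $\mathcal{C}^{op}$ is a morphism $\alpha$ of $\mathcal{C}$, which by (N1)(c) in $\mathcal{C}$ is the first map of some $\Theta$-angle; rotating $\alpha$ into the $(n-1)$-st slot and applying $D$ yields a $\Phi$-angle beginning with $\alpha$. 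The heart of this stage is (N2): a direct computation shows that the left rotation in $(\mathcal{C}^{op},\Sigma^{-1})$ of $D(X_\bullet)$ coincides with $D$ applied to the right rotation of $X_\bullet$ in $(\mathcal{C},\Sigma)$, that is, $L\circ D=D\circ R$. The sign $(-1)^n$ attached to the last arrow of $D(X_\bullet)$ is exactly what makes the rotation signs cancel, so this convention is forced on us. Since (N2) in $\mathcal{C}$ says $\Theta$ is stable under $L$ and $L$ is invertible with inverse $R$, $\Theta$ is stable under $R$ as well, and the identity $L\circ D=D\circ R$ then gives (N2) for $\Phi$ in both directions.

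For (N3) I would start from a commutative diagram of $\Phi$-angles in $\mathcal{C}^{op}$ whose first two vertical maps $\psi_1,\psi_2$ are prescribed. Applying $D^{-1}$ turns this into a commutative diagram of $\Theta$-angles in $\mathcal{C}$ in which the two prescribed components sit in positions $n-1$ and $n$ and point in the reversed direction. Using (N3) together with the rotation invariance already established — so that a morphism may be completed starting from any pair of consecutive prescribed components, not only the first two — one fills in the remaining maps in $\mathcal{C}$, and $D$ carries the completed morphism back to the required completion in $\mathcal{C}^{op}$.

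The main obstacle is (N4). Here I would invoke Theorem~\ref{2.3} to replace (N4) by the octahedral axiom (N4$'$) and verify the latter for $\Phi$. Under $D^{-1}$ the octahedral configuration in $\mathcal{C}^{op}$ becomes an octahedral configuration in $\mathcal{C}$, with the roles of the two horizontal rows and the vertical column, and the directions of all arrows, interchanged; applying (N4$'$) in $\mathcal{C}$ produces the connecting morphisms and the large $n$-angle $(2.1)$, and $D$ transports everything back. The genuinely delicate point — and the step I expect to consume almost all of the work — is the sign bookkeeping: one must check that the various $(-1)^i$ entries in the matrix differentials of $(2.1)$, together with the $(-1)^n$ twist built into $D$, reassemble after dualization into precisely the matrices prescribed by (N4$'$) in $\mathcal{C}^{op}$. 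Once the signs are reconciled, all four axioms hold and $(\mathcal{C}^{op},\Sigma^{-1},\Phi)$ is $n$-angulated.
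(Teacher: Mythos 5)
The paper itself offers no proof of this lemma---it is quoted verbatim from \cite[Remarks 1.2 (c)]{[GKO]}---so what you have written is essentially the verification that Geiss--Keller--Oppermann leave to the reader, and your overall strategy is the right one. Your treatment of (N1)--(N3) is correct and complete in the ways that matter: the identity $L\circ D=D\circ R$ does hold on the nose (I checked: both sides are $X_{n-1}\to\cdots\to X_1\xrightarrow{(-1)^n\Sigma^{-1}f_n}\Sigma^{-1}X_n\xrightarrow{(-1)^n\Sigma^{-1}f_{n-1}}\Sigma^{-1}X_{n-1}$ read in $\mathcal{C}^{op}$), and you correctly identify the $(-1)^n$ in the definition of $\Phi$ as forced by this identity. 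Your (N1)(b) rotation argument and the observation that in (N1)(c) the morphism should be rotated into the $(n-1)$-st slot (via right rotations, so that no sign ever lands on it) are both sound, as is the rotation trick for (N3). Your use of Theorem~\ref{2.3} is also legitimate and non-circular: you establish (N1)--(N3) for $\Phi$ first, so the theorem applies in $\mathcal{C}^{op}$ and reduces (N4) there to (N4$'$), while (N4$'$) holds for $\Theta$ by the same theorem applied in $\mathcal{C}$.

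The one place where your text is a plan rather than a proof is exactly the step you flag yourself: you assert, but do not verify, that the dualized octahedral configuration reassembles into (N4$'$) with the prescribed signs, and since everything beyond formal dualization lives in that bookkeeping, a complete write-up must carry it out. Be warned that it is not quite the ``rows and column interchanged'' symmetry you describe: the dual of (N4$'$) is a co-octahedral statement in which the common objects sit at the opposite ends of the rows, so one must first rotate all three angles before (N4$'$) in $\mathcal{C}$ becomes applicable, and each rotation feeds further $(-1)^n$'s into the ledger. A cleaner route is to verify (N4) for $\Phi$ directly, bypassing the octahedral matrices entirely: dualize, rotate the two prescribed components to the front, apply (N4) in $\mathcal{C}$, and then use two elementary compatibilities---the mapping cone of a rotated morphism is isomorphic to the rotation of the mapping cone, and $D$ of a mapping cone is, up to a diagonal sign isomorphism, a rotation of the mapping cone of the dualized morphism. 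The isomorphisms needed are diagonal matrices with entries $\pm 1$, and $\Theta$ (hence $\Phi$) is closed under isomorphism of $n$-$\Sigma$-sequences by (N1)(a), since an isomorphic sequence is a direct summand with zero complement. Either way the signs must be checked, but the direct route involves only $2\times 2$ triangular matrices rather than the full array of (2.1).
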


\begin{lem}\label{2.2}
Let $$X_1\xrightarrow{f_1}X_2\xrightarrow{f_2}X_3\xrightarrow{f_3}\cdots\xrightarrow{f_{n-1}}X_n\xrightarrow{f_n}\Sigma X_1$$ be an $n$-angle in an $n$-angulated category $\mathcal{C}$. Then

(1) the induced sequence
$$\cdots\rightarrow\mathcal{C}(-,X_1)\rightarrow\mathcal{C}(-,X_2)\rightarrow\cdots\rightarrow\mathcal{C}(-,X_n)\rightarrow\mathcal{C}(-,\Sigma X_1)\rightarrow\cdots$$ is exact;

(2) the induced sequence
$$\cdots\rightarrow\mathcal{C}(\Sigma X_1,-)\rightarrow\mathcal{C}(X_n,-)\rightarrow\cdots\rightarrow\mathcal{C}(X_2,-)\rightarrow\mathcal{C}( X_1,-)\rightarrow\cdots$$ is exact.
\end{lem}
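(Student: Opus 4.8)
The plan is to establish part (1) first, reducing it by rotation to exactness at a single term, and then to deduce part (2) formally from (1) by passing to the opposite $n$-angulated category of Lemma~\ref{2.1}. For (1), fix an object $W\in\mathcal{C}$ and apply $\mathcal{C}(W,-)$; one must check exactness of the resulting sequence of abelian groups at every term. Since every rotation of the given $n$-angle is again an $n$-angle by (N2), and rotating only shifts which object plays the role of $X_2$ (altering some maps by a sign, which is irrelevant for exactness), it suffices to prove exactness of
$$\mathcal{C}(W,X_1)\xrightarrow{f_{1*}}\mathcal{C}(W,X_2)\xrightarrow{f_{2*}}\mathcal{C}(W,X_3)$$
at the middle term, for an arbitrary $n$-angle.

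For the inclusion $\operatorname{im} f_{1*}\subseteq\ker f_{2*}$ I would first record that $f_2f_1=0$. Apply (N3) to the morphism from the trivial $n$-angle $X_1\xrightarrow{1_{X_1}}X_1\rightarrow 0\rightarrow\cdots\rightarrow 0\rightarrow\Sigma X_1$ to the given $n$-angle, with first two components $1_{X_1}$ and $f_1$; the commuting square joining the second and third columns reads $f_2\circ f_1=\varphi_3\circ 0=0$. Hence $f_{2*}f_{1*}=(f_2f_1)_*=0$.

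The heart of the argument is the reverse inclusion $\ker f_{2*}\subseteq\operatorname{im} f_{1*}$. Given $a\colon W\to X_2$ with $f_2a=0$, I would left-rotate both the trivial $n$-angle on $W$ and the given $n$-angle to obtain the two rows of
$$\xymatrix{
W \ar[r]^{0} \ar[d]^{a} & 0 \ar[r] \ar[d] & \cdots \ar[r] & \Sigma W \ar[r]^{(-1)^n\Sigma 1_W} \ar[d]^{\varphi_n} & \Sigma W \ar[d]^{\Sigma a} \\
X_2 \ar[r]^{f_2} & X_3 \ar[r] & \cdots \ar[r] & \Sigma X_1 \ar[r]^{(-1)^n \Sigma f_1} & \Sigma X_2
}$$
both of which are $n$-angles by (N2). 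The leftmost square commutes precisely because $f_2a=0$, so (N3) completes the data $(a,0)$ to a morphism of $n$-$\Sigma$-sequences, producing in particular a map $\varphi_n\colon\Sigma W\to\Sigma X_1$. Reading off the rightmost commuting square gives $\Sigma f_1\circ\varphi_n=\Sigma a$, and applying the functor $\Sigma^{-1}$ yields $f_1\circ(\Sigma^{-1}\varphi_n)=a$. Thus $b:=\Sigma^{-1}\varphi_n\colon W\to X_1$ satisfies $f_{1*}(b)=a$, as needed, completing (1).

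Finally, part (2) follows from (1) applied to the opposite $n$-angulated category $(\mathcal{C}^{op},\Sigma^{-1},\Phi)$ of Lemma~\ref{2.1}: the given $n$-angle in $\mathcal{C}$ corresponds to an $n$-angle in $\Phi$, and under the identification $\mathcal{C}^{op}(-,Y)=\mathcal{C}(Y,-)$ the exact sequence provided by (1) in $\mathcal{C}^{op}$ is exactly the sequence of contravariant functors asserted in (2). I expect the only genuinely delicate point to be the sign bookkeeping in the rotated diagram above together with the verification that (N3) delivers the completing morphism in the $\varphi_n$-slot from which $b$ is extracted; the reduction by rotation, the identity $f_2f_1=0$, and the passage to the opposite category are all routine.
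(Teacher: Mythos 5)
Your proof is correct and follows the paper's route: for part (2) you use exactly the paper's argument, deducing it from part (1) via the opposite $n$-angulated category of Lemma~\ref{2.1}, while for part (1) the paper simply cites \cite[Proposition 1.5 (a)]{[GKO]}, and your rotation-plus-(N3) argument (including the cancellation of the $(-1)^n$ signs in the last square) is precisely the standard proof behind that citation. No gaps: the reduction by (N2) to exactness at one spot, the vanishing $f_2f_1=0$ via the trivial $n$-angle, and the extraction of $b=\Sigma^{-1}\varphi_n$ all check out.
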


\begin{proof}
(1) We refer to \cite[Proposition 1.5 (a)]{[GKO]}. (2) Follows from (1) and Lemma \ref{2.1}.
\end{proof}

\section{Main results}

Let $\mathcal{C}$ be an additive category and $\mathcal{D}$ a subcategory of $\mathcal{C}$. When
we say that $\mathcal{D}$ is a subcategory of $\mathcal{C}$, we always mean that $\mathcal{D}$ is full and is closed under
isomorphisms, direct sums and direct summands. A morphism $f:X\rightarrow Y$ in $\mathcal{C}$ is called $\mathcal{D}$-$monic$ if $\mathcal{C}(Y,D)\xrightarrow{\mathcal{C}(f,D)} \mathcal{C}(X,D)\rightarrow 0$ is exact for any object $D\in\mathcal{D}$. A morphism $f:X\rightarrow D$ in $\mathcal{C}$ is called a $left$ $\mathcal{D}$-$approximation$ $of$ $X$ if $f$ is $\mathcal{D}$-monic and $D\in\mathcal{D}$. We can defined $\mathcal{D}$-$epic$ morphism and $right$ $\mathcal{D}$-$approximation$ dually.

\begin{defn}
Let $\mathcal{C}$ be an $n$-angulated category, and $\mathcal{D}\subseteq\mathcal{Z}$ be subcategories of $\mathcal{C}$. The pair $(\mathcal{Z},\mathcal{Z})$ is called a $\mathcal{D}$-$mutation\ pair$ if it satisfies:

(1) For any object $X\in\mathcal{Z}$, there exists an $n$-angle $$X\xrightarrow{d_1}D_1\xrightarrow{d_2}D_2\xrightarrow{d_3}\cdots\xrightarrow{d_{n-2}}D_{n-2}\xrightarrow{d_{n-1}}Y\xrightarrow{d_n}\Sigma X$$
where $D_i\in\mathcal{D}, Y\in\mathcal{Z}, d_1$ is a left $\mathcal{D}$-approximation and $d_{n-1}$ is a right $\mathcal{D}$-approximation.

(2) For any object $Y\in\mathcal{Z}$, there exists an $n$-angle $$X\xrightarrow{d_1}D_1\xrightarrow{d_2}D_2\xrightarrow{d_3}\cdots\xrightarrow{d_{n-2}}D_{n-2}\xrightarrow{d_{n-1}}Y\xrightarrow{d_n}\Sigma X$$
where $X\in\mathcal{Z}, D_i\in\mathcal{D}, d_1$ is a left $\mathcal{D}$-approximation and $d_{n-1}$ is a right $\mathcal{D}$-approximation.
\end{defn}

Note that if $\mathcal{C}$ is a triangulated category, i.e. when $n=3$, and $\mathcal{D}$ is a rigid subcategory, then a $\mathcal{D}$-mutation pair is just the same as Iyama-Yoshino's definition \cite[Definition 2.5]{[IY]}.

\begin{example}
We first recall the standard construction of $n$-angulated categories given by Geiss-Keller-Oppermann \cite[Theorem 1]{[GKO]}.
Let $\mathcal{C}$ be a triangulated category and $\mathcal{T}$ an $(n-2)$-cluster tilting subcategory which is closed under $\Sigma^{n-2}$, where $\Sigma$ is the suspension functor of $\mathcal{C}$. Then $(\mathcal{T},\Sigma^{n-2},\Theta)$ is an $n$-angulated category, where $\Theta$ is the class of all sequences
$$X_1\xrightarrow{f_1}X_2\xrightarrow{f_2}X_3\xrightarrow{f_3}\cdots\xrightarrow{f_{n-1}}X_n\xrightarrow{f_n}\Sigma^{n-2} X_1$$
such that there exists a diagram
 $$\xymatrixcolsep{0.3pc}
 \xymatrix{
                & X_2 \ar[dr]\ar[rr]^{f_2}  &  & X_3  \ar[dr]  & & \cdots  & & X_{n-1} \ar[dr]^{f_{n-1}}      \\
X_1 \ar[ur]^{f_1} & \mid & \ar[ll]  X_{2.5}\ar[ur] & \mid &  \ar[ll]  X_{3.5} & \cdots & X_{n-1.5}\ar[ur] & \mid & \ar[ll] X_n   }$$
with $X_i\in\mathcal{T}$ for all $i\in\mathbb{Z}$, such that all oriented triangles are triangles in $\mathcal{C}$, all non-oriented triangles commute, and $f_n$ is the composition along the lower edge of the diagram.

If $\mathcal{D}\subseteq\mathcal{Z}$ are subcategories of $\mathcal{T}$ and $(\mathcal{Z},\mathcal{Z})$ is a $\mathcal{D}$-mutation pair in triangulated category $\mathcal{C}$, then it is easy to see that $(\mathcal{Z},\mathcal{Z})$ is a $\mathcal{D}$-mutation pair in $n$-angulated category $\mathcal{T}$.
\end{example}

For a $\mathcal{D}$-mutation pair $(\mathcal{Z},\mathcal{Z})$ in an $n$-angulated category $\mathcal{C}$, consider the quotient category $\mathcal{Z}/\mathcal{D}$, whose objects are objects of $\mathcal{Z}$ and given two objects $X, Y$, the set of morphisms $(\mathcal{Z}/\mathcal{D})(X,Y)$ is defined as the quotient group $\mathcal{Z}(X,Y)/[\mathcal{D}](X,Y)$, where $[\mathcal{D}](X,Y)$ is the subgroup of morphisms from $X$ to $Y$ factoring through some object in $\mathcal{D}$. For any morphism $f:X\rightarrow Y$ in $\mathcal{Z}$, we denote by $\underline{f}$ the image of $f$ under the quotient functor $\mathcal{Z}\rightarrow\mathcal{Z}/\mathcal{D}$.

\begin{lem}\label{3.1}
Let $$\xymatrix{
X_1 \ar[r]^{f_1}\ar[d]^{a_1} & X_2 \ar[r]^{f_2}\ar[d]^{a_2} & X_3 \ar[r]^{f_3}\ar[d]^{a_3} & \cdots \ar[r]^{f_{n-2}} & X_{n-1} \ar[r]^{f_{n-1}}\ar[d]^{a_{n-1}}& X_n \ar[r]^{f_n}\ar[d]^{a_n} & \Sigma X_1 \ar[d]^{\Sigma a_1}\\
X \ar[r]^{d_1} & D_1 \ar[r]^{d_2} & D_2 \ar[r]^{d_3} & \cdots \ar[r]^{d_{n-2}} & D_{n-2} \ar[r]^{d_{n-1}} & Y \ar[r]^{d_n} & \Sigma X,\\
}$$
$$\xymatrix{
X_1 \ar[r]^{f_1}\ar[d]^{a_1} & X_2 \ar[r]^{f_2}\ar[d]^{a_2'} & X_3 \ar[r]^{f_3}\ar[d]^{a_3'} & \cdots \ar[r]^{f_{n-2}} & X_{n-1} \ar[r]^{f_{n-1}}\ar[d]^{a_{n-1}'}& X_n \ar[r]^{f_n}\ar[d]^{a_n'} & \Sigma X_1 \ar[d]^{\Sigma a_1}\\
X \ar[r]^{d_1} & D_1 \ar[r]^{d_2} & D_2 \ar[r]^{d_3} & \cdots \ar[r]^{d_{n-2}} & D_{n-2} \ar[r]^{d_{n-1}} & Y \ar[r]^{d_n} & \Sigma X \\
}$$ be morphisms of $n$-angles, where $X,Y,X_i\in\mathcal{Z}$ and $D_j\in\mathcal{D}$. Then $\underline{a_n}=\underline{a_n'}$ in quotient category $\mathcal{Z}/\mathcal{D}$.
\end{lem}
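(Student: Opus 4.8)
The plan is to reduce everything to one factorization. Set $b_i=a_i-a_i'$ for $i=1,\dots,n$. Since the two given diagrams are morphisms of $n$-angles over the \emph{same} top and bottom $n$-angles, and since they share the outer components $a_1$ and $\Sigma a_1$, the family $(b_1,b_2,\dots,b_n)=(0,b_2,\dots,b_n)$ again satisfies all the ladder-commutativity relations of a morphism of $n$-$\Sigma$-sequences, and its first vertical component is $b_1=0$. It therefore suffices to prove that $b_n=a_n-a_n'$ factors through an object of $\mathcal{D}$: once this is known, $\underline{b_n}=0$ and hence $\underline{a_n}=\underline{a_n'}$ in $\mathcal{Z}/\mathcal{D}$.

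The key step is to exploit only the rightmost commuting square. From it we get $d_n\circ b_n=\Sigma b_1\circ f_n=0$, because $b_1=0$. Now apply Lemma \ref{2.2}(1) to the bottom $n$-angle $X\xrightarrow{d_1}D_1\to\cdots\to D_{n-2}\xrightarrow{d_{n-1}}Y\xrightarrow{d_n}\Sigma X$ with the representable functor $\mathcal{C}(X_n,-)$. Exactness at the term $\mathcal{C}(X_n,Y)$ says $\ker\mathcal{C}(X_n,d_n)=\operatorname{im}\mathcal{C}(X_n,d_{n-1})$; since $\mathcal{C}(X_n,d_n)(b_n)=d_n\circ b_n=0$, there is a morphism $e\colon X_n\to D_{n-2}$ with $b_n=d_{n-1}\circ e$. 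This completes the argument: $b_n$ factors as $X_n\xrightarrow{e}D_{n-2}\xrightarrow{d_{n-1}}Y$ through $D_{n-2}\in\mathcal{D}$, so $b_n\in[\mathcal{D}](X_n,Y)$ and $\underline{a_n}=\underline{a_n'}$.

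I expect the real content to be the recognition that the entire ladder is irrelevant apart from its last square: one might at first anticipate an inductive homotopy-type construction producing factorizations all along the rows, whereas a single use of the long exact sequence of Lemma \ref{2.2}(1) suffices, precisely because the two liftings already agree in the first component $a_1$. The only small points to check are that the factoring morphism $e$ may be an arbitrary morphism of $\mathcal{C}$ and need not lie in $\mathcal{Z}$ (which is harmless for the quotient $\mathcal{Z}/\mathcal{D}$), and that the sign twists in the rotation axioms play no role here since I work directly with the commuting square. Note also that the approximation properties of $d_1$ and $d_{n-1}$ from the mutation pair are not needed for this lemma; only the facts that the bottom row is an $n$-angle and that its inner terms $D_j$ lie in $\mathcal{D}$ enter.
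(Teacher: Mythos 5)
Your proof is correct and is essentially identical to the paper's: both use only the rightmost commuting square to obtain $d_n(a_n-a_n')=0$ and then invoke Lemma \ref{2.2}(1) to factor $a_n-a_n'$ through $d_{n-1}$, hence through $D_{n-2}\in\mathcal{D}$. Your added remarks (that the rest of the ladder is irrelevant, that the factoring morphism need not lie in $\mathcal{Z}$, and that the approximation properties of $d_1$, $d_{n-1}$ are unused here) are accurate but not a different argument.
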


\begin{proof}
Since $d_na_n=\Sigma a_1\cdot f_n=d_na_n'$, we have $d_n(a_n-a_n')=0$. By Lemma \ref{2.2}(1) we obtain that $a_n-a_n'$ factors through $d_{n-1}$ thus $\underline{a_n}=\underline{a_n'}$.
\end{proof}

Now we construct a functor $T: \mathcal{Z}/\mathcal{D}\rightarrow\mathcal{Z}/\mathcal{D}$ as follows.
For any object $X\in\mathcal{Z}$, fix an $n$-angle $$X\xrightarrow{d_1}D_1\xrightarrow{d_2}D_2\xrightarrow{d_3}\cdots\xrightarrow{d_{n-2}}D_{n-2}\xrightarrow{d_{n-1}}TX\xrightarrow{d_n}\Sigma X$$
with $D_i\in\mathcal{D}, TX\in\mathcal{Z}, d_1$ is a left $\mathcal{D}$-approximation  and $d_{n-1}$ is a right $\mathcal{D}$-approximation. For any morphism $f\in\mathcal{Z}(X, X')$, since $d_1$ is a left $\mathcal{D}$-approximation, there exist morphisms $a_i$ and $g$ which make the following diagram commutative.
$$\xymatrix{
X \ar[r]^{d_1}\ar[d]^{f} & D_1 \ar[r]^{d_2}\ar[d]^{a_1} & D_2 \ar[r]^{d_3}\ar[d]^{a_2} & \cdots \ar[r]^{d_{n-2}} & D_{n-2} \ar[r]^{d_{n-1}}\ar[d]^{a_{n-2}}& TX \ar[r]^{d_n}\ar[d]^{g} & \Sigma X \ar[d]^{\Sigma f}\\
X' \ar[r]^{d_1'} & D_1' \ar[r]^{d_2'} & D_2' \ar[r]^{d_3'} & \cdots \ar[r]^{d_{n-2}'} & D_{n-2}'  \ar[r]^{d_{n-1}'} & TX' \ar[r]^{d_n'}& \Sigma X'\\
}$$
Now we put $T\underline{f}=\underline{g}$. Note that $\Sigma f\cdot d_n=d_n'\cdot g$, which will be used frequently.

\begin{prop}
The functor $T: \mathcal{Z}/\mathcal{D}\rightarrow\mathcal{Z}/\mathcal{D}$ is  a well defined equivalence.
\end{prop}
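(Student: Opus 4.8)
The plan is to check, in order, that $T$ is well defined on morphisms, that it respects identities and composition, that it is fully faithful, and finally that it is dense; an equivalence follows. Writing the defining $n$-angle of $X$ as $X\xrightarrow{d_1}D_1\to\cdots\to D_{n-2}\xrightarrow{d_{n-1}}TX\xrightarrow{d_n}\Sigma X$, and similarly with primes for $X'$, I will lean on two consequences of Lemma~\ref{2.2} and the approximation hypotheses. The first is that, for $f\in\mathcal{Z}(X,X')$, one has $\underline f=0$ if and only if $\Sigma f\cdot d_n=0$: the class $\underline f$ vanishes precisely when $f$ factors through $d_1$ (as $d_1$ is a left $\mathcal{D}$-approximation), and exactness of $\mathcal{C}(-,X')$ on the defining $n$-angle of $X$ [Lemma~\ref{2.2}(2)] identifies this with $\Sigma f\cdot d_n=0$. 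The second is that any morphism $TX\to TX'$ annihilated by $d_n'$ factors through $d_{n-1}'$, hence through $\mathcal{D}$, by Lemma~\ref{2.2}(1). Both interact with the identity $\Sigma f\cdot d_n=d_n'\cdot g$ recorded just after the definition of $T$.

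For well-definedness, the lift $g$ of a given $f$ exists because $d_1$ is $\mathcal{D}$-monic (yielding $a_1\colon D_1\to D_1'$ with $a_1d_1=d_1'f$), after which axiom (N3) completes the square to a morphism of $n$-angles with last component $g$. That $\underline g$ depends only on $\underline f$ splits into two points: for a fixed representative $f$, any two lifts agree modulo $\mathcal{D}$ by Lemma~\ref{3.1}; and if $f$ factors through $\mathcal{D}$ then $\Sigma f\cdot d_n=0$, so $d_n'g=0$ and $\underline g=0$ by the second observation. As the construction is additive (a sum of lifts lifts the sum), these give $T\underline f=\underline g$ well defined. Functoriality is then immediate: all-identity data lift $1_X$ to $1_{TX}$, and the composite of two lifts lifts the composite of the underlying maps.

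For faithfulness, suppose $T\underline f=0$, so the lift $g$ factors through an object of $\mathcal{D}$; since $d_{n-1}'$ is a right $\mathcal{D}$-approximation ($\mathcal{D}$-epic), such a $g$ satisfies $d_n'g=0$, whence $\Sigma f\cdot d_n=0$ and $\underline f=0$ by the first observation. For fullness, take $\underline\psi\colon TX\to TX'$. Because $d_{n-1}'$ is $\mathcal{D}$-epic, $\psi d_{n-1}$ factors through $d_{n-1}'$, so $d_n'\psi\,d_{n-1}=0$; exactness [Lemma~\ref{2.2}(2)] then factors $d_n'\psi$ through $d_n$, say $d_n'\psi=k\,d_n$, and since $\Sigma$ is an automorphism $k=\Sigma f$ for a unique $f\in\mathcal{Z}(X,X')$. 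Lifting $f$ to $g$ gives $d_n'g=\Sigma f\cdot d_n=d_n'\psi$, so $d_n'(g-\psi)=0$ and $\underline g=\underline\psi$ by the second observation; thus $T\underline f=\underline\psi$.

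For density, let $W\in\mathcal{Z}$. The second condition in the definition of a $\mathcal{D}$-mutation pair supplies an $n$-angle $X\xrightarrow{e_1}E_1\to\cdots\to E_{n-2}\to W\xrightarrow{e_n}\Sigma X$ with $X\in\mathcal{Z}$, $E_i\in\mathcal{D}$, and $e_1$ a left $\mathcal{D}$-approximation. Comparing it with the defining $n$-angle of $X$ (which ends in $TX$), I complete $1_X$ in both directions—using that $d_1$ and $e_1$ are $\mathcal{D}$-monic to start the squares and (N3) to finish—obtaining $\underline g\colon TX\to W$ and $\underline{g'}\colon W\to TX$. The composites $g'g$ and $1_{TX}$ are both last components of morphisms of the defining $n$-angle of $X$ to itself with first component $1_X$, so Lemma~\ref{3.1} gives $\underline{g'g}=\underline{1_{TX}}$, and symmetrically $\underline{gg'}=\underline{1_W}$; hence $TX\cong W$ in $\mathcal{Z}/\mathcal{D}$ and $T$ is dense. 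I expect this last step to be the main obstacle: isolating a clean uniqueness-up-to-$\mathcal{D}$-isomorphism statement for $n$-angles sharing their first term, and showing the two comparison maps are mutually inverse in the quotient, is exactly where Lemma~\ref{3.1} together with the left- and right-approximation properties must be combined with care.
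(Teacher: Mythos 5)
Your proof is correct, and it takes a genuinely different route from the paper's. The paper disposes of the statement by constructing a dual functor $T'$ on $\mathcal{Z}/\mathcal{D}$ from the second condition of the mutation pair (fixing, for each $X$, an $n$-angle $T'X\to D_1\to\cdots\to D_{n-2}\to X\to\Sigma T'X$ and lifting morphisms via the right $\mathcal{D}$-approximation $d_{n-1}$), invoking the dual of Lemma~\ref{3.1} for well-definedness and then asserting that it is ``easy to check'' that $T'$ is quasi-inverse to $T$. You instead verify full faithfulness and density directly. Your two preliminary observations are sound: $\underline{f}=0$ iff $\Sigma f\cdot d_n=0$ (factoring through $\mathcal{D}$ is equivalent to factoring through the left approximation $d_1$, and the rotated Hom-exact sequence of Lemma~\ref{2.2} converts this into the vanishing condition), and any $\psi$ with $d_n'\psi=0$ factors through $d_{n-1}'$, hence through $\mathcal{D}$ --- the same mechanism that drives the paper's Lemma~\ref{3.1}. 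Your fullness step, factoring $d_n'\psi$ through $d_n$ after killing it against $d_{n-1}$ via the right approximation property, is the one place where you do work the paper never makes explicit (the paper gets full faithfulness for free from the existence of a quasi-inverse), and it goes through. Your density argument --- comparing the fixed $n$-angle $X\to D_1\to\cdots\to TX\to\Sigma X$ with the mutation-pair angle ending in $W$, and using Lemma~\ref{3.1} twice to see the two comparison maps are mutually inverse in the quotient --- is precisely the content hidden in the paper's ``easy to check that $T'$ gives a quasi-inverse of $T$'', restricted to what density actually needs. On balance, the paper's approach buys an explicit quasi-inverse and a clean symmetry (everything dualizes), while yours avoids constructing $T'$ altogether and replaces the unverified quasi-inverse claim with concrete Hom-exactness computations, making the characterization of morphisms that vanish in $\mathcal{Z}/\mathcal{D}$ explicit; both are complete proofs of the proposition.
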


\begin{proof}
By Lemma \ref{3.1}, it is easy to see that $T$ is a well defined functor. We can construct another functor $T':\mathcal{Z}/\mathcal{D}\rightarrow\mathcal{Z}/\mathcal{D}$ by a dual manner. For any object $X\in\mathcal{Z}$, fix an $n$-angle $$T'X\xrightarrow{d_1}D_1\xrightarrow{d_2}D_2\xrightarrow{d_3}\cdots\xrightarrow{d_{n-2}}D_{n-2}\xrightarrow{d_{n-1}}X\xrightarrow{d_n}\Sigma T'X$$
with $D_i\in\mathcal{D}, T'X\in\mathcal{Z}, d_1$ is a left $\mathcal{D}$-approximation  and $d_{n-1}$ is a right $\mathcal{D}$-approximation. For any morphism $f\in\mathcal{Z}(X, X')$, since $d_{n-1}$ is a right $\mathcal{D}$-approximation, there exist morphisms $b_i$ and $g$ which make the following diagram commutative.
$$\xymatrix{
T'X \ar[r]^{d_1}\ar[d]^{g} & D_1 \ar[r]^{d_2}\ar[d]^{b_{n-2}} & D_2 \ar[r]^{d_3}\ar[d]^{b_{n-3}} & \cdots \ar[r]^{d_{n-2}} & D_{n-2} \ar[r]^{d_{n-1}}\ar[d]^{b_{1}}& X \ar[r]^{d_n}\ar[d]^{f} & \Sigma T'X \ar[d]^{\Sigma g}\\
T'X' \ar[r]^{d_1'} & D_1' \ar[r]^{d_2'} & D_2' \ar[r]^{d_3'} & \cdots \ar[r]^{d_{n-2}'} & D_{n-2}'  \ar[r]^{d_{n-1}'} & X' \ar[r]^{d_n'}& \Sigma T'X'\\
}$$
We put $T'\underline{f}=\underline{g}$. The dual of Lemma \ref{3.1} implies that $T'$ is a well defined functor.
It is easy to check that $T'$ gives a quasi-inverse of $T$.
\end{proof}

\begin{defn}\label{3.3}
Let $$X_1\xrightarrow{f_1}X_2\xrightarrow{f_2}X_3\xrightarrow{f_3}\cdots\xrightarrow{f_{n-1}}X_n\xrightarrow{f_n}\Sigma X_1$$ be an $n$-angle, where $X_i\in\mathcal{Z}$ and $f_1$ is $\mathcal{D}$-monic. Then there exists a commutative diagram of $n$-angles
$$\xymatrix{
X_1 \ar[r]^{f_1}\ar@{=}[d] & X_2 \ar[r]^{f_2}\ar[d]^{a_2} & X_3 \ar[r]^{f_3}\ar[d]^{a_3} & \cdots \ar[r]^{f_{n-2}} & X_{n-1} \ar[r]^{f_{n-1}}\ar[d]^{a_{n-1}}& X_n \ar[r]^{f_n}\ar[d]^{a_n} & \Sigma X_1 \ar@{=}[d]^{\ \ \ \mbox{(3.1)}} \\
X_1 \ar[r]^{d_1} & D_1 \ar[r]^{d_2} & D_2 \ar[r]^{d_3} & \cdots \ar[r]^{d_{n-2}} & D_{n-2} \ar[r]^{d_{n-1}}& TX_1 \ar[r]^{d_n} & \Sigma X_1. \\
}$$
The $n$-$T$-sequence $$X_1\xrightarrow{\underline{f_1}}X_2\xrightarrow{\underline{f_2}}X_3\xrightarrow{\underline{f_3}}\cdots\xrightarrow{\underline{f_{n-1}}}
X_n\xrightarrow{\underline{a_n}} TX_1$$ is called a $standard\ n$-$angle$ in $\mathcal{Z}/\mathcal{D}$ . We define $\Phi$ the class of $n$-$T$-sequences which are isomorphic to standard $n$-angles.
\end{defn}

\begin{lem}\label{3.2}
Assume that we have a commutative diagram
$$\xymatrix{
X_1 \ar[r]^{f_1}\ar[d]^{\varphi_1} & X_2 \ar[r]^{f_2}\ar[d]^{\varphi_2} & X_3 \ar[r]^{f_3}\ar[d]^{\varphi_3} & \cdots \ar[r]^{f_{n-1}}& X_n \ar[r]^{f_n}\ar[d]^{\varphi_n} & \Sigma X_1 \ar[d]^{\Sigma \varphi_1}\\
Y_1 \ar[r]^{g_1} & Y_2 \ar[r]^{g_2} & Y_3 \ar[r]^{g_3} & \cdots \ar[r]^{g_{n-1}} & Y_n \ar[r]^{g_n}& \Sigma Y_1\\
}$$
where the rows are $n$-angles in $\mathcal{C}$,  all $X_i,Y_i\in \mathcal{Z}$ and $f_1, g_1$ are $\mathcal{D}$-monic. Then we have the following commutative diagram
$$\xymatrix{
X_1 \ar[r]^{\underline{f_1}}\ar[d]^{\underline{\varphi_1}} & X_2 \ar[r]^{\underline{f_2}}\ar[d]^{\underline{\varphi_2}} & X_3 \ar[r]^{\underline{f_3}}\ar[d]^{\underline{\varphi_3}} & \cdots \ar[r]^{\underline{f_{n-1}}}& X_n \ar[r]^{\underline{a_n}}\ar[d]^{\underline{\varphi_n}} & TX_1 \ar[d]^{T\underline{\varphi_1}}\\
Y_1 \ar[r]^{\underline{g_1}} & Y_2 \ar[r]^{\underline{g_2}} & Y_3 \ar[r]^{\underline{g_3}} & \cdots \ar[r]^{\underline{g_{n-1}}} & Y_n \ar[r]^{\underline{b_n}}& TY_1\\
}$$ where the rows are standard $n$-angles in $\mathcal{Z}/\mathcal{D}$.
\end{lem}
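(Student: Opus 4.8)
The plan is to verify commutativity of the asserted diagram square by square. The first $n-1$ squares will be free, and the whole content of the lemma sits in the last square, the one involving $T\underline{\varphi_1}$; I will settle it by producing two morphisms of $n$-angles with the same first component and appealing to Lemma~\ref{3.1}.

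First I would fix the data hidden in the statement. Writing the fixed $n$-angles defining $T$ as $X_1\xrightarrow{d_1}D_1\to\cdots\to TX_1\xrightarrow{d_n}\Sigma X_1$ and $Y_1\xrightarrow{d_1'}D_1'\to\cdots\to TY_1\xrightarrow{d_n'}\Sigma Y_1$, Definition~\ref{3.3} presents the top standard $n$-angle through a morphism of $n$-angles $(1_{X_1},a_2,\ldots,a_n)$ from $(X_\bullet,f_\bullet)$ to the fixed $X_1$-angle, with last map $\underline{a_n}$, and likewise the bottom one through $(1_{Y_1},b_2,\ldots,b_n)$ from $(Y_\bullet,g_\bullet)$ to the fixed $Y_1$-angle, with last map $\underline{b_n}$. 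Finally $T\underline{\varphi_1}=\underline{h}$, where $h\colon TX_1\to TY_1$ is the last component of the morphism of $n$-angles $(\varphi_1,\ast,\ldots,\ast,h)$ from the fixed $X_1$-angle to the fixed $Y_1$-angle used in the construction of $T$ on $\underline{\varphi_1}$.

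For $1\le i\le n-1$ the $i$-th square commutes in $\mathcal{Z}/\mathcal{D}$ for a trivial reason: the identity $\varphi_{i+1}f_i=g_i\varphi_i$ already holds in $\mathcal{C}$, being a square of the given diagram $\varphi$, so it descends to $\underline{\varphi_{i+1}}\,\underline{f_i}=\underline{g_i}\,\underline{\varphi_i}$. The remaining task is the last square, namely $T\underline{\varphi_1}\cdot\underline{a_n}=\underline{b_n}\cdot\underline{\varphi_n}$, that is $\underline{ha_n}=\underline{b_n\varphi_n}$. Here is the key point. Composing $(1_{X_1},a_2,\ldots,a_n)$ with $(\varphi_1,\ast,\ldots,h)$ gives a morphism of $n$-angles from $(X_\bullet,f_\bullet)$ to the fixed $Y_1$-angle whose first component is $\varphi_1$ and whose $n$-th component is $ha_n$; composing instead $\varphi=(\varphi_1,\ldots,\varphi_n)$ with $(1_{Y_1},b_2,\ldots,b_n)$ gives a morphism of $n$-angles between the same two $n$-angles, again with first component $\varphi_1$, but with $n$-th component $b_n\varphi_n$. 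Since the source has all terms in $\mathcal{Z}$ and the common target is the fixed $Y_1$-angle, which is a $\mathcal{D}$-mutation $n$-angle with $Y_1,TY_1\in\mathcal{Z}$ and $D_j'\in\mathcal{D}$, Lemma~\ref{3.1} applies to these two morphisms (they agree in the first component $\varphi_1$) and yields $\underline{ha_n}=\underline{b_n\varphi_n}$, as required.

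The only real obstacle is the last square. Everything else is formal, and the crux is to recognise that this square is exactly the uniqueness situation of Lemma~\ref{3.1}: once the two composite morphisms of $n$-angles are written down, equality of their first components is automatic from the constructions, and Lemma~\ref{3.1} forces equality of the $n$-th components modulo $\mathcal{D}$.
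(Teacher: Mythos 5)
Your proposal is correct and follows essentially the same route as the paper: the paper likewise reduces everything to the last square $T\underline{\varphi_1}\cdot\underline{a_n}=\underline{b_n}\cdot\underline{\varphi_n}$, forms the same two composite morphisms of $n$-angles from $(X_\bullet,f_\bullet)$ to the fixed $n$-angle $Y_1\to D_1'\to\cdots\to TY_1\to\Sigma Y_1$ (one through the fixed $X_1$-angle via $(1_{X_1},a_2,\dots,a_n)$ and the defining morphism of $T\underline{\varphi_1}$, the other via $\varphi$ followed by $(1_{Y_1},b_2,\dots,b_n)$), and applies Lemma~\ref{3.1} to their common first component $\varphi_1$. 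Your verification that Lemma~\ref{3.1}'s hypotheses hold (all source terms in $\mathcal{Z}$, target of mutation type with $Y_1,TY_1\in\mathcal{Z}$ and $D_j'\in\mathcal{D}$) matches the paper's implicit use of that lemma.
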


\begin{proof}
We only need to show that $T\underline{\varphi_1}\cdot \underline{a_n}=\underline{b_n}\cdot\underline{\varphi_n}$. By the constructions of  the morphism $T\underline{\varphi_1}$ and the standard $n$-angles in $\mathcal{Z}/\mathcal{D}$, we have the following two commutative diagrams of $n$-angles
$$\xymatrix{
X_1 \ar[r]^{f_1}\ar@{=}[d] & X_2 \ar[r]^{f_2}\ar[d]^{a_2} & X_3 \ar[r]^{f_3}\ar[d]^{a_3} & \cdots \ar[r]^{f_{n-2}} & X_{n-1} \ar[r]^{f_{n-1}}\ar[d]^{a_{n-1}}& X_n \ar[r]^{f_n}\ar[d]^{a_n} & \Sigma X_1 \ar@{=}[d]\\
X_1 \ar[r]^{d_1}\ar[d]^{\varphi_1} & D_1 \ar[r]^{d_2}\ar[d]^{a_1'} & D_2 \ar[r]^{d_3}\ar[d]^{a_2'} & \cdots \ar[r]^{d_{n-2}} & D_{n-2} \ar[r]^{d_{n-1}}\ar[d]^{a_{n-2}'}& TX_1 \ar[r]^{d_n}\ar[d]^{\psi_1} & \Sigma X_1\ar[d]^{\Sigma \varphi_1}\\
Y_1 \ar[r]^{d_1'} & D_1' \ar[r]^{d_2'} & D_2' \ar[r]^{d_3'} & \cdots \ar[r]^{d_{n-2}'} & D_{n-2}'  \ar[r]^{d_{n-1}'} & TY_1 \ar[r]^{d_n'}& \Sigma Y_1,\\
}$$
$$\xymatrix{
X_1 \ar[r]^{f_1}\ar[d]^{\varphi_1} & X_2 \ar[r]^{f_2}\ar[d]^{\varphi_2} & X_3 \ar[r]^{f_3}\ar[d]^{\varphi_3} & \cdots\ar[r]^{f_{n-2}} & X_{n-1}  \ar[r]^{f_{n-1}}\ar[d]^{\varphi_{n-1}}& X_n \ar[r]^{f_n}\ar[d]^{\varphi_n} & \Sigma X_1 \ar[d]^{\Sigma \varphi_1}\\
Y_1 \ar[r]^{g_1}\ar@{=}[d] & Y_2 \ar[r]^{g_2}\ar[d]^{b_2} & Y_3 \ar[r]^{g_3}\ar[d]^{b_3} & \cdots \ar[r]^{g_{n-2}} & Y_{n-1} \ar[r]^{g_{n-1}}\ar[d]^{b_{n-1}} & Y_n \ar[r]^{g_n}\ar[d]^{b_n}& \Sigma Y_1\ar@{=}[d]\\
Y_1 \ar[r]^{d_1'} & D_1' \ar[r]^{d_2'} & D_2' \ar[r]^{d_3'} & \cdots \ar[r]^{d_{n-2}'} & D_{n-2}'  \ar[r]^{d_{n-1}'} & TY_1 \ar[r]^{d_n'}& \Sigma Y_1
}$$
where $T\underline{\varphi_1}=\underline{\psi_1}$. Lemma \ref{3.1}  implies that $T\underline{\varphi_1}\cdot \underline{a_n}=\underline{b_n}\cdot\underline{\varphi_n}$.
\end{proof}

\begin{defn}
Let $\mathcal{C}$ be an $n$-angulated category. A subcategory $\mathcal{Z}$ is called extension-closed if for any $n$-angle
$$X_1\xrightarrow{f_1}X_2\xrightarrow{f_2}X_3\xrightarrow{f_3}\cdots\xrightarrow{f_{n-1}}X_n\xrightarrow{f_n}\Sigma X_1$$ in $\mathcal{C}$, the objects $X_1,X_n\in\mathcal{Z}$ implies that $X_2,X_3,\cdots, X_{n-1}\in\mathcal{Z}$.
\end{defn}

Now we can state and prove our main theorem.

\begin{thm} \label{3.4}
Let $\mathcal{C}$ be an $n$-angulated category, and $\mathcal{D}\subseteq\mathcal{Z}$ be subcategories of $\mathcal{C}$. If $(\mathcal{Z},\mathcal{Z})$ is a $\mathcal{D}$-mutation pair and $\mathcal{Z}$ is extension-closed, then the quotient category $\mathcal{Z}/\mathcal{D}$ is an $n$-angulated category with respect to the autoequivalence $T$ and $n$-angles defined in Definition \ref{3.3}.
\end{thm}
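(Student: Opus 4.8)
The plan is to verify the four $n$-angulated axioms (N1)--(N4) for the triple $(\mathcal{Z}/\mathcal{D}, T, \Phi)$ directly, lifting each axiom from the ambient $n$-angulated category $\mathcal{C}$ via the mutation-pair hypothesis. The key enabling observation, already isolated in Definition \ref{3.3}, is that every standard $n$-angle in $\mathcal{Z}/\mathcal{D}$ arises by comparing an $n$-angle $X_1\xrightarrow{f_1}\cdots\xrightarrow{f_n}\Sigma X_1$ in $\mathcal{C}$ with $X_i\in\mathcal{Z}$ and $f_1$ being $\mathcal{D}$-monic against the defining $n$-angle $X_1\xrightarrow{d_1}D_1\xrightarrow{d_2}\cdots\xrightarrow{d_{n-1}}TX_1\xrightarrow{d_n}\Sigma X_1$ used to construct $T$. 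Throughout I will use that $d_1$ is a left $\mathcal{D}$-approximation (so comparison morphisms exist whenever the first map is $\mathcal{D}$-monic) and that Lemma \ref{3.1} guarantees the resulting morphism $\underline{a_n}\colon X_n\to TX_1$ is independent, in $\mathcal{Z}/\mathcal{D}$, of all the choices made.

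First I would dispose of (N1). For (N1)(b), the trivial $n$-angle $X\xrightarrow{1_X}X\to 0\to\cdots\to 0\to \Sigma X$ in $\mathcal{C}$ has $1_X$ trivially $\mathcal{D}$-monic, and comparing it to the defining $n$-angle for $TX$ produces the trivial standard $n$-angle $X\xrightarrow{1_X}X\to 0\to\cdots\to 0\to TX$, so it lies in $\Phi$. For (N1)(c), given $\underline{f_1}\in(\mathcal{Z}/\mathcal{D})(X_1,X_2)$ I must embed a representative into an $n$-angle with $\mathcal{D}$-monic first map; the standard trick is to replace $f_1$ by $\left(\begin{smallmatrix}f_1\\ d_1\end{smallmatrix}\right)\colon X_1\to X_2\oplus D_1$ (using the left $\mathcal{D}$-approximation $d_1$ of $X_1$), which is $\mathcal{D}$-monic and equals $\underline{f_1}$ in the quotient since the $D_1$-component dies, then extend using (N1)(c) and extension-closedness of $\mathcal{Z}$ to keep the middle terms in $\mathcal{Z}$. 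For (N1)(a), closure under direct sums is inherited from $\mathcal{C}$; closure under summands requires a short argument that a summand of a standard $n$-angle is again standard, which reduces to the analogous property in $\mathcal{C}$ combined with Lemma \ref{3.1}.

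Next, (N2) (rotation) and (N3) (completion of morphisms). For (N2) I would show the left rotation of a standard $n$-angle is again (isomorphic to) a standard $n$-angle: rotate the defining $n$-angle in $\mathcal{C}$ and check, using the octahedral-type comparison and the description of $T$ via the fixed defining sequences, that the rotated sequence is the standard $n$-angle built from the rotated data; the sign $(-1)^n$ on $\Sigma f_1$ and the identification of $T$ on the rotated object must be tracked carefully. Axiom (N3) is essentially Lemma \ref{3.2}: given a morphism of standard $n$-angles specified on the first two terms, the lift to $\mathcal{C}$ (possible because $f_1,g_1$ are $\mathcal{D}$-monic) produces a morphism of $n$-angles in $\mathcal{C}$, which I then project to $\mathcal{Z}/\mathcal{D}$; Lemma \ref{3.2} supplies exactly the commuting last square involving $T\underline{\varphi_1}$.

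The main obstacle is (N4), the mapping-cone axiom, and this is where I expect the real work to lie. By Theorem \ref{2.3} it suffices to verify either (N4) or the higher octahedral axiom (N4$'$); I would attack (N4$'$), since it is the version most amenable to lifting. Given the input diagram of standard $n$-angles in $\mathcal{Z}/\mathcal{D}$, I would lift every map and every $n$-angle to $\mathcal{C}$ (the two top rows and the second column), apply (N4$'$) in $\mathcal{C}$ to obtain the morphisms $\varphi_i,\psi_j,\phi_k$ and the big $n$-angle (2.1), and then verify three things: that all objects appearing stay in $\mathcal{Z}$ (here extension-closedness of $\mathcal{Z}$ is essential, applied repeatedly to the columns), that the connecting maps—especially the final one $\Sigma f_2\cdot h_n$, which must be converted into a map into $TX_3$ via the defining $n$-angle for $TX_3$—descend correctly modulo $\mathcal{D}$, and that the resulting $n$-$T$-sequence is isomorphic to a standard $n$-angle. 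The delicate points are the bookkeeping of signs across (2.1) and, above all, replacing each $\Sigma$-term by the corresponding $T$-term consistently while invoking Lemma \ref{3.1} to guarantee well-definedness of the descended connecting morphism; this compatibility between the ambient suspension $\Sigma$ and the induced autoequivalence $T$ is the crux of the entire argument.
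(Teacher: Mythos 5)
Your overall strategy coincides with the paper's: verify (N1), (N2), (N3) and, via Theorem \ref{2.3}, the higher octahedral axiom (N4$'$) for the class of standard $n$-angles, using Lemma \ref{3.1} for well-definedness of the descended last morphism, Lemma \ref{3.2} for (N3), and the replacement of $f$ by $\left(\begin{smallmatrix} f \\ d_1 \end{smallmatrix}\right)$ in (N1)(c). However, one concrete step in your sketch would fail as written. In (N1)(c) you propose to ``extend using (N1)(c) and extension-closedness of $\mathcal{Z}$ to keep the middle terms in $\mathcal{Z}$,'' but extension-closedness cannot be applied to the $n$-angle $X \xrightarrow{\left(\begin{smallmatrix} f \\ d_1 \end{smallmatrix}\right)} Y\oplus D_1 \to X_3 \to \cdots \to X_n \to \Sigma X$ directly: the definition requires \emph{both} the first and the $n$-th object to lie in $\mathcal{Z}$, and here $X_n$ is precisely what you need to show lies in $\mathcal{Z}$. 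The paper's essential extra move, absent from your outline, is to splice this $n$-angle with the projection $Y\oplus D_1\xrightarrow{(0\ 1_{D_1})} D_1$ and the defining $n$-angle $X\to D_1\to\cdots\to TX\to\Sigma X$ by means of (N4$'$) in $\mathcal{C}$, and then rotate by (N2) to obtain an $n$-angle $Y\to X_3\to X_4\oplus D_2\to\cdots\to X_n\oplus D_{n-2}\to TX\to\Sigma Y$ whose two relevant endpoints $Y$ and $TX$ \emph{do} lie in $\mathcal{Z}$; only then do extension-closedness and closure under direct summands give $X_3,\dots,X_n\in\mathcal{Z}$.

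Relatedly, you locate the use of extension-closedness in the wrong axiom: you call it ``essential'' in (N4$'$), ``applied repeatedly to the columns,'' but in the paper's proof of (N4$'$) no extension-closedness is needed at all, since every object of the $n$-angle (2.1) is a direct sum $X_{i+1}\oplus Y_i\oplus Z_{i-1}$ of objects already known to be in $\mathcal{Z}$ (the three input $n$-angles being lifted from standard $n$-angles with all terms in $\mathcal{Z}$). The genuine work there is instead what you only gesture at: proving that $\left(\begin{smallmatrix} f_3 \\ \varphi_3 \end{smallmatrix}\right):X_3\to X_4\oplus Y_3$ is $\mathcal{D}$-monic, so that (2.1) induces a standard $n$-angle in $\mathcal{Z}/\mathcal{D}$, and then verifying the two identities $\underline{e_n}=T\underline{f_2}\cdot\underline{c_n}$ and $\underline{c_n}\,\underline{\psi_n}=T\underline{f_1}\cdot\underline{b_n}$ by computing with $d_n$, $d_n'$, $d_n''$ and the exactness of Lemma \ref{2.2}. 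Your (N2) and (N3) sketches are in the right spirit (the ``octahedral-type comparison'' is indeed how the paper handles rotation, including the identification $\underline{\psi'}=(-1)^n\underline{a_n}$, and (N3) requires first correcting $\varphi_2$ by a map factoring through $\mathcal{D}$ using $\mathcal{D}$-monicity of $f_1$); with the (N1)(c) splice supplied and the extension-closedness bookkeeping corrected, your outline matches the paper's proof.
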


\begin{proof}
We will check that the class of $n$-$T$-sequences $\Phi$, which is defined in Definition \ref{3.3}, satisfies the axioms (N1), (N2), (N3) and (N4$'$). It is easy to see from the definition that (N1)(a) is satisfied.

For any object $X\in\mathcal{Z}$, the identity  morphism of $X$ is $\mathcal{D}$-monic. The commutative diagram
$$\xymatrix{
X \ar[r]^{1_X}\ar@{=}[d] & X \ar[r]^{0}\ar[d]^{d_1} & 0 \ar[r]^{0}\ar[d]^{0} & \cdots \ar[r]^{0} & 0 \ar[r]^{0}\ar[d]^{0}& 0 \ar[r]^{0}\ar[d]^{0} & \Sigma X \ar@{=}[d]\\
X \ar[r]^{d_1} & D_1 \ar[r]^{d_2} & D_2 \ar[r]^{d_3} & \cdots \ar[r]^{d_{n-2}} & D_{n-2} \ar[r]^{d_{n-1}}& TX \ar[r]^{d_n} & \Sigma X \\
}$$ shows that $$X\xrightarrow{1_X}X\rightarrow 0\rightarrow\cdots\rightarrow 0\rightarrow TX$$ belongs to $\Phi$. Thus (N1)(b) is satisfied.

For any morphism $f:X\rightarrow Y$ in $\mathcal{Z}$, the morphism $\left(
                                                          \begin{smallmatrix}
                                                            f \\
                                                            d_1 \\
                                                          \end{smallmatrix}
                                                        \right)
:X\rightarrow Y\oplus D_1$ is $\mathcal{D}$-monic, where $d_1$ is a left $\mathcal{D}$-approximation. Now we have three $n$-angles in $\mathcal{C}$:
$$ X\xrightarrow{\left(
                                                          \begin{smallmatrix}
                                                            f \\
                                                            d_1 \\
                                                          \end{smallmatrix}
                                                        \right)}
Y\oplus D_1\xrightarrow{f_2}X_3\xrightarrow{f_3}\cdots\xrightarrow{f_{n-2}}X_{n-1}\xrightarrow{f_{n-1}}X_n\xrightarrow{f_n}\Sigma X \hspace{5mm} (3.2)$$
$$Y\oplus D_1\xrightarrow{\left(
                            \begin{smallmatrix}
                              0 & 1_{D_1} \\
                            \end{smallmatrix}
                          \right)
}D_1\xrightarrow{0}0\xrightarrow{0}\cdots\xrightarrow{0}0\xrightarrow{0}\Sigma Y\xrightarrow{\left(
                                                                                               \begin{smallmatrix}
                                                                                                 1_{\Sigma Y} \\
                                                                                                 0 \\
                                                                                               \end{smallmatrix}
                                                                                             \right)
}\Sigma Y\oplus\Sigma D_1$$
$$X\xrightarrow{d_1}D_1\xrightarrow{d_2}D_2\xrightarrow{d_3}\cdots\xrightarrow{d_{n-2}}D_{n-2}\xrightarrow{d_{n-1}}TX\xrightarrow{d_n}\Sigma X$$
where the first $n$-angle is by (N1)(c). Since $d_1=\left(
                            \begin{smallmatrix}
                              0 & 1_{D_1} \\
                            \end{smallmatrix}
                          \right)
                          \left(
                                                          \begin{smallmatrix}
                                                            f \\
                                                            d_1 \\
                                                          \end{smallmatrix}
                                                        \right)$,
we use axiom (N4$'$) to get an $n$-angle in $\mathcal{C}$
$$X_3\rightarrow X_4\oplus D_2\rightarrow X_5\oplus D_3\rightarrow\cdots\rightarrow X_n\oplus D_{n-2}\rightarrow TX\rightarrow \Sigma Y\rightarrow \Sigma X_3.$$ Thus by (N2) we obtain the following $n$-angle
$$Y\rightarrow X_3\rightarrow X_4\oplus D_2\rightarrow X_5\oplus D_3\rightarrow\cdots\rightarrow X_n\oplus D_{n-2}\rightarrow TX\rightarrow \Sigma Y.$$ Since $\mathcal{Z}$ is extension-closed and $Y, TX\in\mathcal{Z}$, we have $X_3, X_4\oplus D_2,\cdots,X_n\oplus D_{n-2}\in\mathcal{Z}$. Thus $X_i\in\mathcal{Z}, i=3,4,\cdots,n$. Therefore, the $n$-angle $(3.2)$ induces the standard $n$-angle in $\mathcal{Z}/\mathcal{D}$
$$X\xrightarrow{\underline{f}}Y\xrightarrow{\underline{f_2}}X_3\xrightarrow{\underline{f_3}}\cdots\xrightarrow{\underline{f_{n-2}}}
X_{n-1}\xrightarrow{\underline{f_{n-1}}}X_n\xrightarrow{\underline{a_n}}TX.$$
Thus (N1)(c) is satisfied.

(N2)  We only consider the case of standard $n$-angle, since the general case can be easily deduced.  Let $$X_1\xrightarrow{\underline{f_1}}X_2\xrightarrow{\underline{f_2}}X_3\xrightarrow{\underline{f_3}}\cdots\xrightarrow{\underline{f_{n-1}}}
X_n\xrightarrow{\underline{a_n}} TX_1$$ be a standard $n$-angle induced by the commutative diagram (3.1).
We need to show that its left rotation belongs to $\Phi$.

Consider the three $n$-angles in $\mathcal{C}$:
$$ \Sigma^{-1}X_n\xrightarrow{(-1)^{n}\Sigma^{-1}f_n} X_1\xrightarrow{f_1}X_2\xrightarrow{f_2}X_3\xrightarrow{f_3}\cdots\xrightarrow{f_{n-3}}X_{n-2}\xrightarrow{f_{n-2}}X_{n-1}\xrightarrow{f_{n-1}}X_n$$
$$X_1\xrightarrow{d_1}D_1\xrightarrow{d_2}D_2\xrightarrow{d_3}D_3\xrightarrow{d_4}\cdots\xrightarrow{d_{n-2}}D_{n-2}\xrightarrow{d_{n-1}}
TX_1\xrightarrow{d_n}\Sigma X_1$$
$$\Sigma^{-1}X_n\xrightarrow{0}D_1\xrightarrow{1_{D_1}}D_1\xrightarrow{0}0\xrightarrow{0}\cdots\xrightarrow{0}0\xrightarrow{0}
X_n\xrightarrow{1_{X_n}}X_n$$
where the first one is by (N2). Since $d_1\cdot\Sigma^{-1}f_n=a_2f_1\cdot\Sigma^{-1}f_n=0$, we use (N4$'$) to get an $n$-angle in $\mathcal{C}$
$$X_2\xrightarrow{\left(
                    \begin{smallmatrix}
                      f_2 \\
                      \varphi \\
                    \end{smallmatrix}
                  \right)
} X_3\oplus D_1\xrightarrow{\left(
                              \begin{smallmatrix}
                                -f_3 & 0 \\
                                \phi_3 & \psi \\
                              \end{smallmatrix}
                            \right)
} X_4\oplus D_2\xrightarrow{\left(
                              \begin{smallmatrix}
                                -f_4 & 0 \\
                                \phi_4 & d_3 \\
                              \end{smallmatrix}
                            \right)
} \cdots\xrightarrow{\left(
                              \begin{smallmatrix}
                                -f_{n-2} & 0 \\
                                \phi_{n-2} & d_{n-3} \\
                              \end{smallmatrix}
                            \right)
} $$$$ X_{n-1}\oplus D_{n-3}\xrightarrow{\left(
                              \begin{smallmatrix}
                                (-1)^{n}\varphi' & 0 \\
                                \phi_{n-1} & d_{n-2} \\
                              \end{smallmatrix}
                            \right)
} X_{n}\oplus D_{n-2}\xrightarrow{(\psi',d_{n-1})} TX_1\xrightarrow{\Sigma f_1\cdot d_n} \Sigma X_2$$
with $\varphi'=f_{n-1}$ and $d_n\cdot\psi'=(-1)^nf_n$. Note that $f_n=d_na_n$, we have $d_n(\psi'-(-1)^na_n)=0$. Thus $\psi'-(-1)^na_n$ factors through $d_{n-1}$, so that $\underline{\psi'}=(-1)^n\underline{a_n}$.
We claim that the morphism $\left(
                 \begin{smallmatrix}
                   f_2 \\
                   \varphi \\
                 \end{smallmatrix}
               \right): X_2\rightarrow X_3\oplus D_1
$ is $\mathcal{D}$-monic. In fact, for any morphism $f:X_2\rightarrow D$ with $D\in\mathcal{D}$, there is a morphism $g:D_1\rightarrow D$ such that $ff_1=gd_1=g\varphi f_1$ since $d_1$ is $\mathcal{D}$-monic. Now $(f-g\varphi)f_1=0$, there exists a morphism $h:X_3\rightarrow D$ with $f-g\varphi=hf_2$. So $f=(h\ g)\left(
                 \begin{smallmatrix}
                   f_2 \\
                   \varphi \\
                 \end{smallmatrix}
               \right)$.
Assume that $T\underline{f_1}=\underline{g_1}$, then we get $d_n'\cdot g_1=\Sigma f_1\cdot d_n$  by the definition of $T\underline{f_1}$.
Thus we can obtain the following commutative diagram.
$$\xymatrixcolsep{3pc}
\xymatrix{
X_2\ar[r]^{\left(
                    \begin{smallmatrix}
                      f_2 \\
                      \varphi \\
                    \end{smallmatrix}
                  \right ) \ \ \
}\ar@{=}[d] & X_3\oplus D_1\ar[r]^{\left(
                              \begin{smallmatrix}
                                -f_3 & 0 \\
                                \phi_3 & \psi \\
                              \end{smallmatrix}
                            \right)
} \ar@{-->}[d]^{b_1}  & \cdots \ar[r]^{\left(
                              \begin{smallmatrix}
                                (-1)^nf_{n-1} & 0 \\
                                \phi_{n-1} & d_{n-2} \\
                              \end{smallmatrix}
                            \right) \ \ \
}  & \ \ X_{n}\oplus D_{n-2}\ar[r]^{ \ \ \ \ (\psi'\ d_{n-1})} \ar@{-->}[d]^{b_{n-2}} & TX_1\ar[r]^{\Sigma f_1\cdot d_n}\ar[d]^{g_1} & \Sigma X_2 \ar@{=}[d]\\
X_2\ar[r]^{d_1'} & D_1'\ar[r]^{d_2'} & \cdots \ar[r]^{d_{n-2}'} & D_{n-2}'\ar[r]^{d_{n-1}'} & TX_2 \ar[r]^{d_n'} & \Sigma X_2
}$$
 It follows that
$$X_2\xrightarrow{\underline{f_2}}X_3\xrightarrow{-\underline{f_3}}X_4\xrightarrow{-\underline{f_4}}\cdots\xrightarrow{-\underline{f_{n-2}}}
X_{n-1}\xrightarrow{(-1)^n\underline{f_{n-1}}}X_{n}\xrightarrow{(-1)^n\underline{a_n}} TX_1\xrightarrow{T\underline{f_1}} TX_2 \ \ (3.3)$$
is a standard $n$-angle in $\mathcal{Z}/\mathcal{D}$. Thus $$X_2\xrightarrow{\underline{f_2}}X_3\xrightarrow{\underline{f_3}}X_4\xrightarrow{\underline{f_4}}\cdots\xrightarrow{\underline{f_{n-2}}}
X_{n-1}\xrightarrow{\underline{f_{n-1}}}X_{n}\xrightarrow{\underline{a_n}} TX_1\xrightarrow{(-1)^n T\underline{f_1}} TX_2 \ \ (3.4)$$
belongs to $\Phi$ since the $n$-$T$-sequences (3.3) and (3.4) are isomorphic.

On the contrary, let $$X_2\xrightarrow{\underline{f_2}}X_3\xrightarrow{\underline{f_3}}X_4\xrightarrow{\underline{f_4}}\cdots
\xrightarrow{\underline{f_{n-1}}}X_{n}\xrightarrow{\underline{f_n}} TX_1\xrightarrow{T\underline{f_1}} TX_2$$
be a standard $n$-angle in $\mathcal{Z}/\mathcal{D}$. By a similar way, we can show that its right rotation
is also an $n$-angle. We point out that we may use the dual of (N4$'$).

(N3) We only consider the case of standard $n$-angles. Suppose that there is a commutative diagram
$$\xymatrix{
X_1 \ar[r]^{\underline{f_1}}\ar[d]^{\underline{\varphi_1}} & X_2 \ar[r]^{\underline{f_2}}\ar[d]^{\underline{\varphi_2}} & X_3 \ar[r]^{\underline{f_3}}  & \cdots \ar[r]^{\underline{f_{n-1}}}& X_n \ar[r]^{\underline{a_n}} & TX_1 \ar[d]^{T\underline{\varphi_1} \ \ \ \mbox{(3.5)}}\\
Y_1 \ar[r]^{\underline{g_1}} & Y_2 \ar[r]^{\underline{g_2}} & Y_3 \ar[r]^{\underline{g_3}} & \cdots \ar[r]^{\underline{g_{n-1}}} & Y_n \ar[r]^{\underline{b_n}}& TY_1\\
}$$
with rows standard $n$-angles in $\mathcal{Z}/\mathcal{D}$. Since $\underline{\varphi_{2}}\cdot \underline{f_1}=\underline{g_1}\cdot\underline{\varphi_1}$ holds, $\varphi_2f_1-g_1\varphi_1$ factors through some object $D$ in $\mathcal{D}$. Assume that $\varphi_2f_1-g_1\varphi_1=gf$, where $f:X_1\rightarrow D$ and $g:D\rightarrow Y_2$. Since $f_1$ is $\mathcal{D}$-monic, there exists $h:X_2\rightarrow D$ such that $f=hf_1$. Note that $(\varphi_2-gh)f_1=g_1\varphi_1$, by (N3) we have the following commutative diagram
$$\xymatrix{
X_1 \ar[r]^{f_1}\ar[d]^{\varphi_1} & X_2 \ar[r]^{f_2}\ar[d]^{\varphi_2-gh} & X_3 \ar[r]^{f_3}\ar@{-->}[d]^{\varphi_3} & \cdots \ar[r]^{f_{n-1}}& X_n \ar[r]^{f_n}\ar@{-->}[d]^{\varphi_n} & \Sigma X_1 \ar[d]^{\Sigma \varphi_1}\\
Y_1 \ar[r]^{g_1} & Y_2 \ar[r]^{g_2} & Y_3 \ar[r]^{g_3} & \cdots \ar[r]^{g_{n-1}} & Y_n \ar[r]^{g_n}& \Sigma Y_1\\
}$$ with rows $n$-angles in $\mathcal{C}$. By Lemma \ref{3.2}, the diagram (3.5) can be completed to a morphism of $n$-angles.

(N4$'$) We only consider the case of standard $n$-angles. Let
$$X_1\xrightarrow{\underline{f_1}}X_2\xrightarrow{\underline{f_2}}X_3\xrightarrow{\underline{f_3}}\cdots\xrightarrow{\underline{f_{n-1}}}
X_n\xrightarrow{\underline{a_n}} TX_1$$
$$X_1\xrightarrow{\underline{\varphi_2 f_1}}Y_2\xrightarrow{\underline{g_2}}Y_3\xrightarrow{\underline{g_3}}\cdots\xrightarrow{\underline{g_{n-1}}}
Y_n\xrightarrow{\underline{b_n}} TX_1$$
$$X_2\xrightarrow{\underline{\varphi_2}}Y_2\xrightarrow{\underline{h_2}}Z_3\xrightarrow{\underline{h_3}}\cdots\xrightarrow{\underline{h_{n-1}}}
Z_n\xrightarrow{\underline{c_n}} TX_2$$
be three standard $n$-angles in $\mathcal{Z}/\mathcal{D}$ which are induced by the following three $n$-angles in $\mathcal{C}$ respectively:
$$X_1\xrightarrow{f_1}X_2\xrightarrow{f_2}X_3\xrightarrow{f_3}\cdots\xrightarrow{f_{n-1}}
X_n\xrightarrow{f_n} \Sigma X_1$$
$$X_1\xrightarrow{\varphi_2 f_1}Y_2\xrightarrow{g_2}Y_3\xrightarrow{g_3}\cdots\xrightarrow{g_{n-1}}
Y_n\xrightarrow{g_n} \Sigma X_1$$
$$X_2\xrightarrow{\varphi_2}Y_2\xrightarrow{h_2}Z_3\xrightarrow{h_3}\cdots\xrightarrow{h_{n-1}}
Z_n\xrightarrow{h_n} \Sigma X_2$$
where $f_1$ and $\varphi_2$ are $\mathcal{D}$-monic, thus $\varphi_2f_1$ is $\mathcal{D}$-monic too. Then we have $f_n=d_n a_n, g_n=d_n b_n$ and $h_n=d_n' c_n$ by the definition of standard $n$-angles in $\mathcal{Z}/\mathcal{D}$, where $d_n: TX_1\rightarrow \Sigma X_1$ and $d_n': TX_2\rightarrow \Sigma X_2$.

By the  axiom (N4$'$), there exist morphisms $\varphi_i:X_i\rightarrow Y_i (i=3,4,\cdots,n)$, $\psi_j: Y_j\rightarrow Z_j (j=3,4,\cdots,n)$ and $\phi_k: X_k\rightarrow Z_{k-1} (k=4,5,\cdots,n)$ satisfying (N4$'$)(a) and (N4$'$)(b). We need to show that in quotient category $\mathcal{Z}/\mathcal{D}$ the corresponding (N4$'$)(a) and (N4$'$)(b) are satisfied.

By Lemma \ref{3.2}, (N4$'$)(a) holds. For (N4$'$)(b), we first note that the morphism $\left(
                                                                                                                           \begin{smallmatrix}
                                                                                                                                f_3 \\
                                                                                                                                \varphi_3 \\
                                                                                                                               \end{smallmatrix}
                                                                                                                               \right)
:X_3\rightarrow X_4\oplus Y_3$ is $\mathcal{D}$-monic. In fact, for any morphism $f:X_3\rightarrow D$ with $D\in\mathcal{D}$, there exists a morphism $g:Y_2\rightarrow D$ with $ff_2=g\varphi_2$ since $\varphi_2$ is $\mathcal{D}$-monic. Now $g\varphi_2f_1=ff_2f_1=0$, which implies that there exists a morphism $h:Y_3\rightarrow D$ with $g=hg_2$. Note that $\varphi_3f_2=g_2\varphi_2$, thus $(f-h\varphi_3)f_2=0$. Then there exists a morphism $i:X_4\rightarrow D$ with $f-h\varphi_3=if_3$. Hence $f=(i\ h)\left(
          \begin{smallmatrix}
            f_3 \\
            \varphi_3 \\
          \end{smallmatrix}
        \right)
$. Therefore the $n$-angle (2.1) induces an $n$-angle in $\mathcal{Z}/\mathcal{D}$ with the last morphism $\underline{e_n}$ satisfying $d_n''\cdot e_n=\Sigma f_2\cdot h_n$, where $d_n'': TX_3\rightarrow \Sigma X_3$.

To complete the proof, it suffices to check that $\underline{e_n}=T\underline{f_2}\cdot\underline{c_n}$ and $\underline{c_n}\underline{\psi_n}=T\underline{f_1}\cdot \underline{b_n}$. Let $T\underline{f_2}=\underline{i_2}$, then by definition we have $\Sigma f_2\cdot d_n'=d_n'' i_2$. Now $d_n'' i_2 c_n=\Sigma f_2\cdot d_n' c_n=\Sigma f_2\cdot h_n=d_n''e_n$, which implies that $e_n-i_2 c_n$ factors through some object in $\mathcal{D}$, thus $\underline{e_n}=\underline{i_2} \underline{c_n}=T\underline{f_2}\cdot\underline{c_n}$. Similarly let $T\underline{f_1}=\underline{i_1}$, then $\Sigma f_1\cdot d_n=d_n' i_1$. Note that $d_n' c_n \psi_n=h_n\psi_n=\Sigma f_1\cdot g_n=\Sigma f_1\cdot d_n b_n=d_n' i_1 b_n$. Thus $c_n \psi_n-i_1 b_n$ factors through some object in $\mathcal{D}$ and $\underline{c_n}\underline{\psi_n}=T\underline{f_1}\cdot \underline{b_n}$.
\end{proof}

\begin{rem}
In Theorem \ref{3.4}, if $n=3$, then we can recover a theorem of Iyama-Yoshino \cite[Theorem 4.2]{[IY]}.
\end{rem}

Before stating a corollary, we give some definitions.
Let $\mathcal{C}$ be an $n$-angulated category and $\mathcal{Z}$ an $n$-angulated subcategory. Denote by $\mathcal{E}$ the class of all $n$-angles in $\mathcal{Z}$ of the form $$X_1\xrightarrow{f_1}X_2\xrightarrow{f_2}X_3\xrightarrow{f_3}\cdots\xrightarrow{f_{n-2}}X_{n-1}\xrightarrow{f_{n-1}}X_n\xrightarrow{f_n}\Sigma X_1$$
where we say $f_1$ is {\em admissible monomorphism} and $f_{n-1}$ is {\em admissible epimorphism}. An object $I\in \mathcal{Z}$ is called $\mathcal{E}$-{\em injective} if for every admissible monomorphism $f:X\rightarrow Y$ the sequence $\mathcal{Z}(Y,I)\xrightarrow{\mathcal{Z}(f,I)}\mathcal{Z}(X,I)\rightarrow 0$ is exact. Denote by $\mathcal{I}$ the full subcategory of $\mathcal{E}$-injectives. We say $\mathcal{Z}$ {\em has enough $\mathcal{E}$-injectives} if for each object $X\in\mathcal{Z}$ there exists an $n$-angle
$$X\xrightarrow{f_1}I_1\xrightarrow{f_2}I_2\xrightarrow{f_3}\cdots\xrightarrow{f_{n-2}}I_{n-2}\xrightarrow{f_{n-1}}Y\xrightarrow{f_n}\Sigma X$$
in $\mathcal{E}$ with all $I_i\in\mathcal{I}$. The notion of {\ em having enough $\mathcal{E}$-projectives} is defined dually.
If $\mathcal{Z}$ has enough $\mathcal{E}$-injectives, enough $\mathcal{E}$-projectives and if $\mathcal{E}$-injectives and $\mathcal{E}$-projectives coincide, then we say $\mathcal{Z}$ is {\em Frobenius}.

The following result is a higher analogue of \cite[Theorem 7.2]{[B]}. We can also compare it with \cite[Theorem 5.11]{[J]}.

\begin{cor}
Let $\mathcal{C}$ be an $n$-angulated category and $\mathcal{Z}$ an $n$-angulated subcategory. If $\mathcal{Z}$ is Frobenius, then the quotient category $\mathcal{Z}/\mathcal{I}$ is an $n$-angulated category.
\end{cor}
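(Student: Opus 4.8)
The plan is to deduce the corollary from Theorem \ref{3.4} by taking $\mathcal{Z}$ itself as the ambient $n$-angulated category, the subcategory $\mathcal{I}$ of $\mathcal{E}$-injectives as the subcategory $\mathcal{D}$, and the whole of $\mathcal{Z}$ as the middle term of the mutation pair. Since $\mathcal{Z}$ is an $n$-angulated subcategory of $\mathcal{C}$, it is an $n$-angulated category in its own right, and $\mathcal{I}\subseteq\mathcal{Z}$ are subcategories in the required sense; here one first checks that $\mathcal{I}$ is full and closed under isomorphisms, finite direct sums, and direct summands, the last because a retract of an $\mathcal{E}$-injective is again $\mathcal{E}$-injective by the defining lifting property. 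With $\mathcal{D}=\mathcal{I}$, the quotient $\mathcal{Z}/\mathcal{I}$ is exactly the quotient category to which Theorem \ref{3.4} refers.

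Two hypotheses of Theorem \ref{3.4} then remain to be verified: that $(\mathcal{Z},\mathcal{Z})$ is an $\mathcal{I}$-mutation pair, and that $\mathcal{Z}$ is extension-closed. The latter is immediate, since $\mathcal{Z}$ is the whole ambient category, so every object occurring in any $n$-angle already lies in $\mathcal{Z}$. For the former, I would produce the two $n$-angles demanded by the mutation pair definition directly from the Frobenius structure: given $X\in\mathcal{Z}$, the assumption that $\mathcal{Z}$ has enough $\mathcal{E}$-injectives furnishes an $n$-angle $X\to I_1\to\cdots\to I_{n-2}\to Y\to\Sigma X$ in $\mathcal{E}$ with all $I_i\in\mathcal{I}$, which realises condition (1), while having enough $\mathcal{E}$-projectives realises condition (2) dually.

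The crux is to check the approximation conditions, namely that the first morphism $d_1$ is a left $\mathcal{I}$-approximation and the morphism $d_{n-1}$ is a right $\mathcal{I}$-approximation, and this is precisely where the full force of the Frobenius hypothesis enters. In the $n$-angle coming from enough injectives, $d_1$ is an admissible monomorphism, hence $\mathcal{I}$-monic by the very definition of $\mathcal{E}$-injective, so $d_1$ is a left $\mathcal{I}$-approximation. To see that $d_{n-1}$ is a right $\mathcal{I}$-approximation one needs it to be $\mathcal{I}$-epic, i.e. every morphism from an object of $\mathcal{I}$ to $Y$ should lift along the admissible epimorphism $d_{n-1}$; this is exactly the defining lifting property of an $\mathcal{E}$-projective, which applies because in a Frobenius category the $\mathcal{E}$-injectives and $\mathcal{E}$-projectives coincide. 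Thus it is the identity $\mathcal{I}=\{\mathcal{E}\text{-projectives}\}=\{\mathcal{E}\text{-injectives}\}$ that lets both the left- and the right-approximation condition hold for one and the same $n$-angle; without the coincidence one would obtain only half of each condition. The dual argument, using that the $\mathcal{E}$-projectives are also $\mathcal{E}$-injective, handles condition (2). Once the $\mathcal{I}$-mutation pair is established, Theorem \ref{3.4} applies verbatim and shows that $\mathcal{Z}/\mathcal{I}$ carries a natural $n$-angulated structure, which completes the proof.
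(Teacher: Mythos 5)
Your proof is correct and takes essentially the same approach as the paper: both deduce the corollary from Theorem \ref{3.4} by exhibiting $(\mathcal{Z},\mathcal{Z})$ as an $\mathcal{I}$-mutation pair, and your observation that the coincidence of $\mathcal{E}$-injectives and $\mathcal{E}$-projectives is what makes $d_1$ a left $\mathcal{I}$-approximation and $d_{n-1}$ a right $\mathcal{I}$-approximation in one and the same $n$-angle is precisely the verification the paper compresses into ``it is easy to see.'' The only difference is cosmetic: you instantiate the theorem with $\mathcal{Z}$ itself as the ambient $n$-angulated category, which renders extension-closure vacuous, whereas the paper applies the theorem inside $\mathcal{C}$ and instead invokes that an $n$-angulated subcategory is extension-closed in $\mathcal{C}$ --- both instantiations are valid and yield the same $n$-angulated structure on $\mathcal{Z}/\mathcal{I}$.
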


\begin{proof}
It is easy to see that $(\mathcal{Z},\mathcal{Z})$ is an $\mathcal{I}$-mutation pair. Note that $\mathcal{Z}$ is an $n$-angulated subcategory implies that $\mathcal{Z}$ is extension-closed, thus the result follows from Theorem \ref{3.4}.
 \end{proof}

\end{document}